\newcommand{\vecb}[1]{\boldsymbol{#1}}
\newcommand{\Rset}{\mathbb{R}} 
\newcommand{\bigbraces}[1]{\big\{#1\big\}}
\newcommand{\absval}[1]{\lvert #1\rvert}
\DeclareMathOperator{\argmin}{argmin}
\newcommand{\discr}[1]{{#1}_n} 
\newcommand{\Xspace}{\mathbb{X}}  
\newcommand{\dXspace}{\discr{\Xspace}}
\newcommand{\Yspace}{\mathbb{Y}}  
\newcommand{\dYspace}{\discr{\Yspace}}
\newcommand{\Mcurves}{\mathcal{M}}  
\newcommand{\dMcurves}{\discr{\Mcurves}}
\newcommand{\curve}{m}            
\newcommand{\dcurve}{\vecb{\curve}}
\newcommand{\dConstr}{\varPhi}
\newcommand{\Sb}{\mathbb{S}^1}             
\newcommand{\Angle}{\theta}
\newcommand{\dAngle}{\vecb{\Angle}}
\newcommand{\AngleSpace}{\varTheta}
\newcommand{\dAngleSpace}{\discr{\AngleSpace}}
\newcommand{\BasePoint}{p}
\newcommand{\Interval}{[0,1]}
\newcommand{\energy}{\mathcal{E}_{\mathrm{b}}} 
\newcommand{\dEnergy}{\mathcal{E}_{\mathrm{b,n}}}
\newcommand{\penetrEnerg}{\mathcal{E}_{\mathrm{M}}} 
\newcommand{\dPenetrEnerg}{\mathcal{E}_{\mathrm{M,n}}}
\newcommand{\totEnerg}{\mathcal{E}} 
\newcommand{\dTotEnerg}{\discr{\totEnerg}}
\newcommand{\du}{\vecb{u}}
\newcommand{\dv}{\vecb{v}}
\newcommand{\dx}{\vecb{x}}
\newcommand{\dy}{\vecb{y}}
\newcommand{\xdag}{\curve^{\dagger}} 
\newcommand{\data}{y}             
\newcommand{\uinf}{u_{\infty}}    
\newcommand{\McurvesPhase}{\Mcurves_{\mathrm{ampl}}} 
\newcommand{\Fphase}{F_{\mathrm{ampl}}} 
\newcommand{\Uinf}{U_{\infty}}
\newtheorem{theorem}{Theorem}
\newtheorem{proposition}[theorem]{Proposition}
\newtheorem{lemma}[theorem]{Lemma}
\newtheorem{remark}[theorem]{Remark}
\newcommand{\qand}{\quad \text{and} \quad}
\newcommand{\cE}{{\mathcal{E}}}
\newcommand{\cJ}{{\mathcal{J}}}
\DeclareMathOperator{\Hess}{Hess}
\newcommand{\dd}{{\mathrm{d}}}
\newcommand{\ii}{{\mathrm{i}}}
\newcommand{\ceq}{\coloneqq}
\newcommand{\R}{{\mathbb{R}}}
\newcommand{\C}{{\mathbb{C}}}
\newcommand{\N}{\mathbb{N}}
\newcommand{\Z}{{\mathbb{Z}}}
\newcommand{\nabs}[1]{\lvert{#1}\rvert} 
\newcommand{\norm}[1]{\left\lVert#1\right\rVert}
\newcommand{\nnorm}[1]{\lVert{#1}\rVert}
\newcommand{\bignorm}[1]{\big\lVert{#1}\big\rVert}
\newcommand{\ninnerprod}[1]{\langle{#1}\rangle}
\newcommand{\paren}[1]{\left(#1\right)}
\newcommand{\bigparen}[1]{\big(#1\big)}
\newcommand{\biggparen}[1]{\bigg(#1\bigg)}
\newcommand{\bracket}[1]{\left[#1\right]}
\newcommand{\intervalcc}[1]{\left[#1\right]}
\DeclareMathOperator{\grad}{grad}  
\begin{document}
\title{Elastic energy regularization for inverse obstacle scattering problems}

\author{J. Eckhardt\footnote{Institute for Numerical and Applied Mathematics, University of G\"ottingen, Germany}\ , R. Hiptmair\footnote{Seminar for Applied Mathematics, ETH Zurich, Switzerland}\ , T. Hohage$^*$, H. Schumacher\footnote{Institute for Mathematics, RWTH Aachen University, Germany}\ , M. Wardetzky$^*$}

\maketitle

\begin{abstract}
By introducing a shape manifold as a solution set to solve inverse obstacle scattering problems we allow the reconstruction of general, not necessarily star-shaped curves. 
The bending energy is used as a stabilizing term in Tikhonov regularization to gain independence of the parametrization. Moreover, we discuss how self-intersections can 
be avoided by penalization with  the M\"obius energy and prove the regularizing property of 
our approach as well as convergence rates under variational source conditions.

In the second part of the paper the discrete setting is introduced, and we describe a numerical method for finding the minimizer of the Tikhonov functional on a shape-manifold. Numerical examples demonstrate the feasibility of reconstructing non-star-shaped obstacles.

\bigskip
\noindent \textbf{Keywords:} Shape spaces, inverse obstacle scattering, nonlinear 
Tikhonov regularization
\end{abstract}


\section{Introduction}

Inverse obstacle scattering problems consist of reconstructing the shape of an 
impenetrable or homogeneous scattering obstacle from measurements of scattered waves. 
Such problems, which occur for example in structural health monitoring and medical 
imaging, have been studied intensively, see the monographs 
\cite{CC:06,CK:12,KG:08,potthast:01} and references therein. 

One may distinguish two main classes of reconstruction methods for inverse 
obstacle scattering problems: \emph{Sampling methods} and 
\emph{parameterization-based methods}. In sampling methods, an indicator function 
$f \colon \Rset^d\to \Rset \cup \{\infty\}$ is constructed based on the measured data, 
such that the value of $f$ indicates whether 
a point belongs to the obstacle or not (at least for noise-free data). Examples include the linear sampling method 
\cite{AL:09,CC:06,CK:96}, the factorization method \cite{kirsch:98,KG:08}, 
and the singular source method \cite{potthast:96,potthast:01}. 

In contrast, parameterization-based methods yield a parameterization of 
some approximation of the true obstacle. Examples include 
decomposition methods \cite{CM:86,KK:87} and 
iterative regularization methods \cite{Hohage:99}, in particular, regularized Newton methods \cite{HH:07,MTW:88}  
and nonlinear Tikhonov regularization. 

Both sampling and parameterization-based methods have their respective advantages and disadvantages. 
On the one hand, sampling methods 
do not require a-priori knowledge of the obstacle's topology and often 
not even of the boundary condition. They are typically easy to implement. 
On the other hand, they often require (i) a lot of data (e.g., the complex-valued 
far-field patterns for \emph{all} incident fields); are (ii) less flexible concerning 
modifications of the forward problem (e.g., amplitude measurements or nonlinearities);
and yield (iii) less accurate reconstructions than parameterization-based methods. 
Ideally, both types of methods can complement each other by using a sampling 
method to construct an initial guess for a parameterization-based method 
(see Remark \ref{rem:self_intersection} below).  

For parameterization-based methods, one seeks approximate parameterizations of the unknown curve within a chosen class of boundary curves. In existing literature, this class is often chosen in a rather ad hoc manner. E.g., the obstacle is assumed to be star-shaped 
with respect to some known point such that the boundary can be described by a positive, 
periodic radial function. In this manner, one can formulate inverse obstacle problems as operator equations in Hilbert spaces. In this case, the attendant 
penalty terms are usually represented in the form of Sobolev norms of the parameterization. However, such norms  crucially depend on the choice of the parameterization and thus disobey the geometry of the shape to be reconstructed. Indeed, a single curve $\Sb\to \Rset^2$ admits a continuum of possible parameterizations and therefore, parameterization-dependent norms break symmetry in an unnatural manner.
Moreover, the assumption of star-shaped obstacles is severely restrictive. 

Our contribution is the introduction of the boundary curve's bending energy as a regularizing term for the two-dimensional case. This approach is purely geometric and \emph{independent of the choice of any parameterization} and thus allows for arbitrary planar curves (of sufficient regularity). Considering the set of curves as geometric objects, independent of any particular parameterization, is of course a well established paradigm  by now in the context of shape spaces. Michor and Mumford \cite{MiMu04} showed that the space of closed regular curves (of sufficient regularity) carries the structure of a Riemannian manifold. This ansatz leads to certain degeneracies of geodesics and has therefore subsequently been refined and extended is several directions, e.g., using curvature-weighted $L^2$-metrics \cite{MiMu07}, $L^2$-metrics that incorporate a curve's stretching and bending contributions \cite{SrJaJo06, SrKlJo11}, and certain Sobolev-type metrics \cite{BaBrMa14, BaBrMi15, Br15, BrMiMu14}. In particular, curvature-based (i.e., second order) formulations penalize a curve's bending contributions and lead to physically plausible simulations of thin elastic rods and threads \cite{ACBBGM13, BWRAG08, RuWi12b}. In the discrete setting, curvature-based energies can be approximated using polygonal (piecewise straight) curves. Convergence in Hausdorff distance of the resulting minimizers (under suitable boundary conditions and a length constraint)  to their smooth counterparts was recently shown in \cite{SchuWa19}, which provides one of the theoretical underpinnings of the current work. We discuss the attendant discrete model in Section \ref{sec:discrete}.

We  argue that the use of shape manifolds may be largely preferable over para\-me\-te\-rization-dependent
methods, and the use of bending energy may be beneficial whenever regularization 
is applied to the set of curves in form of a penalization. In this paper 
we focus on Tikhonov regularization since it has the simplest and most complete 
convergence analysis. Our main theoretical result is the regularizing property 
of Tikhonov regularization on shape manifolds and convergence rates under 
variational source conditions. 

The plan of the rest of this paper is as follows: We introduce our
shape-manifold of curves as well as the requisite 
bending energy functional on this manifold in the next section.  
Our main theoretical results on the regularizing 
property and convergence rates of the proposed method are contained in Section \ref{sec:tikhonov}. 
We then introduce the sound-soft obstacle scattering problem 
as a typical example of a forward problem and derive some properties of the 
forward operator defined on the shape manifold in Section \ref{sec:scat}. 
In Section \ref{sec:discrete} we describe our discrete model of the 
shape manifold 
and explain how to solve the associated minimization problem. We finally present our numerical results in Section \ref{sec:results} and combine it with a sampling method in section \ref{sec:sampling}. 


\section{Shape manifold and elastic energy}\label{sec:elastic_energy}

In this section we introduce the shape manifold of closed curves $\varGamma$ in $\R^2$
and investigate its structure. We further define an energy functional on this manifold.

\paragraph*{The shape manifold} 
Let $\varGamma \subset \R^2$ be a regular, closed curve of class $H^2$ of length $L$, i.e., there 
is a parameterization $\gamma \in H^2(\Interval,\R^2)$ satisfying $\gamma'(t) \neq 0$ for all $t \in \Interval$  and the closing conditions
\begin{align}
	\gamma(0) = \gamma(1) \qand \gamma'(0) = \gamma'(1) .
	\label{eq:ClosingConditions}
\end{align}
Without loss of generality, we may assume that $\gamma$ is of constant speed, i.e., $\nabs{\gamma'(t)} = L$.
Thus, we may represent $\gamma$ by a triple $\curve = (\Angle, L, \BasePoint)$ with a base point $\BasePoint \ceq \gamma(0)$, the curve's length $L$, and an angle function $\Angle \in H^1(\Interval, \R)$ via
\begin{align}
	\gamma(t)
	= \gamma_{\curve}(t)
	\ceq \BasePoint + \int_0^t \gamma'(\tau) \, \dd \tau
	= \BasePoint + L \int_0^t \, \bigparen{\cos(\Angle(\tau)),\sin(\Angle(\tau))} \, \dd \tau.
	\label{eq:CurveRepresentation}
\end{align}
In order to fulfill the closing conditions \eqref{eq:ClosingConditions}, $\Angle$ needs to satisfy
\begin{align}
	\int_0^1 \cos(\Angle(t)) \, \dd t = 0,
	\quad
	\int_0^1 \sin(\Angle(t)) \, \dd t = 0	
	\qand
	\Angle(1) - \Angle(0) \in 2 \, \pi \, \Z.
	\label{eq:ClosingConditions2}
\end{align}
The number $\frac{\Angle(1) - \Angle(0)}{2 \, \pi}$ is called the \emph{turning number} of $\gamma$ (not to be confused with the winding number).
A necessary (but not sufficient) condition for $\varGamma$ to be embedded is that $\gamma$
has turning number~$\pm 1$.
Since our application focuses on boundary curves of simply connected domains, we may restrict ourselves to curves of turning number~$+1$ and define the space:
\begin{align*}
	\AngleSpace \ceq \bigbraces{ 
		\Angle \in H^1(\Interval,\R) 
		\,\big|\, 
		\textstyle \int_0^1 \bigparen{\cos(\Angle(t)),\sin(\Angle(t))} \, \dd t = 0	,
		\;
		\Angle(1) - \Angle(0) = 2 \, \pi
	}.
\end{align*}
\begin{lemma}
The space $\AngleSpace$ is an embedded submanifold of $H^1(\Interval,\R)$.
\end{lemma}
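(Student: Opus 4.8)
The plan is to realise $\AngleSpace$ as the preimage of a regular value of a smooth map into $\R^3$ and then to invoke the infinite–dimensional regular value (submersion) theorem for maps of Banach manifolds. Concretely, define
\begin{equation*}
	\Phi \colon H^1(\Interval,\R) \to \R^3, \qquad
	\Phi(\Angle) \ceq \Bigparen{\, \textstyle\int_0^1 \cos(\Angle(t))\,\dd t,\; \int_0^1 \sin(\Angle(t))\,\dd t,\; \Angle(1)-\Angle(0) \,},
\end{equation*}
so that $\AngleSpace = \Phi^{-1}(0,0,2\pi)$; this set is nonempty, as $\Angle(t) = 2\pi t$ shows. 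It then suffices to prove that $(0,0,2\pi)$ is a regular value, i.e.\ that $\Phi$ is $C^\infty$ and that $D\Phi(\Angle)$ is a split surjection onto $\R^3$ for every $\Angle \in \AngleSpace$; the submersion theorem then yields that $\AngleSpace$ is a closed $C^\infty$ embedded submanifold of $H^1(\Interval,\R)$ of codimension $3$, with $T_{\Angle}\AngleSpace = \Ker D\Phi(\Angle)$.

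Smoothness of $\Phi$ I would obtain from the continuous one–dimensional Sobolev embedding $H^1(\Interval,\R)\hookrightarrow C^0(\Interval,\R)$: the substitution operators $u\mapsto\cos u$ and $u\mapsto\sin u$ act smoothly on $C^0(\Interval,\R)$ (composition with a $C^\infty$ function having bounded derivatives), integration $C^0(\Interval,\R)\to\R$ is bounded linear, and $u\mapsto u(1)-u(0)$ is a bounded linear functional on $H^1(\Interval,\R)$. Composing, $\Phi$ is $C^\infty$ with
\begin{equation*}
	D\Phi(\Angle)h = \Bigparen{\, \textstyle-\int_0^1 \sin(\Angle)\,h\,\dd t,\; \int_0^1 \cos(\Angle)\,h\,\dd t,\; h(1)-h(0) \,}.
\end{equation*}

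The crux — and the only step that uses the defining constraints of $\AngleSpace$ — is surjectivity of $D\Phi(\Angle)$; since the target is finite–dimensional, this amounts to linear independence of the three component functionals. Suppose $(a,b,c)\in\R^3$ annihilates the range of $D\Phi(\Angle)$. Testing against $h$ vanishing at the endpoints (e.g.\ $h\in C_c^\infty(0,1)$, which is dense in the relevant sense) removes the boundary term and forces $b\cos(\Angle(t)) - a\sin(\Angle(t)) = 0$ for all $t\in\Interval$. If $(a,b)\neq(0,0)$, the zero set of $s\mapsto b\cos s - a\sin s$ is a discrete subset of $\R$, so the continuous function $\Angle$ would have to be constant, contradicting $\Angle(1)-\Angle(0)=2\pi$; hence $(a,b)=(0,0)$. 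Then $c\,(h(1)-h(0))=0$ for all $h\in H^1(\Interval,\R)$ (take $h(t)=t$) gives $c=0$. Thus $D\Phi(\Angle)$ maps onto $\R^3$.

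Finally, a bounded surjection onto a finite–dimensional space always splits topologically: lifting the standard basis of $\R^3$ to vectors $h_1,h_2,h_3\in H^1(\Interval,\R)$ exhibits the finite–dimensional (hence closed) subspace $\Span\{h_1,h_2,h_3\}$ as a complement of the closed subspace $\Ker D\Phi(\Angle)$. Hence $\Phi$ is a submersion along $\AngleSpace$ and the claim follows from the submersion theorem. I expect the surjectivity step to be the only genuinely non-routine point; note that the turning–number condition $\Angle(1)-\Angle(0)=2\pi$ is precisely what rescues surjectivity from the degenerate constant-$\Angle$ locus, where $D\Phi$ would otherwise fail to be onto.
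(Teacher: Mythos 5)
Your proposal is correct; there is no gap. Both you and the paper prove the lemma by exhibiting $\AngleSpace$ as a level set of a smooth constraint map and invoking the submersion/implicit function theorem in the Banach setting, and both hinge on the same non-degeneracy: the turning-number condition forces $\Angle$ to be non-constant, which is exactly what saves surjectivity of the linearized constraint. The execution differs in two ways. The paper first restricts to the affine subspace $E=\{\Angle\in H^1(\Interval,\R)\mid \Angle(1)-\Angle(0)=2\pi\}$, so the remaining map $\varPhi$ goes into $\R^2$, and it verifies the submersion property \emph{constructively}: the ansatz $u=a\sin(\Angle)+b\cos(\Angle)\in H^1_{\mathrm{per}}$ reduces the equation $D\varPhi(\Angle)u=\lambda$ to a $2\times 2$ Gram system whose determinant is nonzero by strict Cauchy--Schwarz (using that $\Angle$ is continuous and non-constant), which directly yields the bounded linear right inverse required by Lang's definition of a submersion. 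You instead keep all three constraints in a single map into $\R^3$ and prove surjectivity by a duality argument (an annihilating triple $(a,b,c)$ must vanish, testing first with compactly supported $h$ and then with $h(t)=t$), obtaining the splitting of the kernel for free from finite codimension. Your route is somewhat more abstract and avoids choosing the affine slice; the paper's route is more explicit, producing a concrete right inverse inside the periodic tangent space and thereby a transparent description of the complement of $T_\Angle\AngleSpace$. Either argument establishes the lemma, with the same codimension count ($1+2=3$ in $H^1(\Interval,\R)$).
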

\begin{proof}
We may write $\AngleSpace = \set{ \Angle \in E | \varPhi(u) = 0}$ with the affine subspace $E = \set{\Angle \in H^1(\Interval,\R)  | \Angle(1) - \Angle(0) = 2 \, \pi}$
and the smooth mapping $\varPhi \colon E \to \R^2$, 
\begin{align*}
	\varPhi(\Angle) = \int_0^1 \bigparen{\!\cos (\theta(t)), \sin (\theta(t))} \, \dd t.
\end{align*}

By virtue of the implicit function theorem, all that we have to do is to show that $\varPhi$ is a \emph{submersion}, i.e. that $D\varPhi(\Angle)$ admits a bounded linear right inverse for each $\Angle \in E$ (see \cite[Chapter II, \S 2]{Lang:99}).
Notice that $E$ is an affine subspace over the linear subspace
\begin{align*}
	H^1_{\mathrm{per}}(\Interval,\R) 
	\ceq 
	\bigbraces{u\in H^1(\Interval,\R) \, \big|\, u(0)=u(1)}
\end{align*}
and that
\begin{align*}
	D\varPhi \, u
	= \begin{pmatrix}
		-\int_0^1 \sin (\theta(t)) \, u(t) \, \dd t \\
		\hphantom{-} \int_0^1 \cos (\theta(t)) \, u(t) \, \dd t
	\end{pmatrix}.
\end{align*}
Thus, it suffices to construct a $u \in H^1_{\mathrm{per}}(\Interval,\R)$ solving $D\varPhi \, u = \lambda$
and depending linearly on the right hand side for any prescribed $\lambda \in \R^2$.
We set $s(t)\ceq \sin(\theta(t))$ and $c(t)\ceq \cos(\theta(t))$ and make the ansatz $u(t) = a \, s(t) + b \, c(t)$, which leads to the linear equation
\begin{align*}
\begin{pmatrix}
- \langle s,s\rangle &- \langle s,c\rangle \\
 \langle c,s\rangle  &  \langle c,c\rangle 
\end{pmatrix}
\begin{pmatrix}
a \\ b
\end{pmatrix}
= 
\begin{pmatrix}
 \lambda_1 \\  \lambda_2 
\end{pmatrix},
\end{align*}
where  $\langle \cdot ,\cdot\rangle$ denotes the $L^2$ inner product. By Cauchy-Schwarz, the determinant of this system is negative since $\theta(t)$ is continuous and not constant. 
\end{proof}
The tangent space of $\AngleSpace$ is given by
\begin{align*}
	T_\Angle \AngleSpace
	=
	\bigbraces{
		u \in  H^1_{\mathrm{per}}(\Interval,\R) 
		\,\big|\,
		\textstyle
		D \varPhi(\Angle) \, u = 0
	}.
\end{align*}
The family of inner products $(g_\Angle)_{\Angle \in \AngleSpace}$ defined by
\begin{align*}
	g_\Angle(u,v) \ceq \int_0^1 \bigparen{ u(t) \, v(t) + u'(t) \, v'(t) }\, \dd t
	\quad
	\text{for $u$, $v \in T_\Angle \AngleSpace$}
\end{align*}
turns $(\AngleSpace,g)$ into a infinite-dimensional Riemannian manifold (in the sense of~\cite{Lang:99}).

For a compact, convex set of base points $B \subset \R^2$ and for bounds of acceptable curve lengths, we define our space of feasible curves by
\begin{align*}
	\Mcurves \ceq \AngleSpace \times [L_1,L_2] \times B.
\end{align*}
Then $\Mcurves$ is a smooth submanifold with corners in the Hilbert space
\begin{align*}
	\Xspace \ceq H^1(\Interval,\R) \times \R \times \R^2
\end{align*}
and its tangent space at an interior point $\curve = (\Angle, L,\BasePoint)$ is given by
\begin{align*}
	T_m \Mcurves = T_\Angle \AngleSpace \oplus \R \oplus \R^2.
\end{align*}

\paragraph*{Elastic energy} 
Continuing our geometric approach, recall that the Euler-Bernoulli bending energy (see \cite{EulerElasticae}) of a planar curve $\varGamma$ is given by 
\begin{equation*}
 \int_\varGamma \kappa^2\, \dd s,
\end{equation*}
where $\dd s$ is the line element and $\kappa$ denotes the (signed) curvature of $\varGamma$. 
The bending energy, or more precisely the curvature, is a geometrical invariant of the curve 
$\varGamma$ and thus we gain \emph{independence under reparameterizations}, which is the main benefit of our approach. The bending energy models the stored deformation energy of $\varGamma$ under the assumption of an undeformed \emph{straight} rest state of the same length as $\varGamma$.

Let $\varGamma$ be parameterized by $\gamma$ that is represented by $m = ( \Angle, L, \BasePoint) \in \Mcurves$ as in~\eqref{eq:CurveRepresentation}. Then we have $\kappa(t) = \frac{\Angle'(t)}{L}$ and $\dd s = L \, \dd t$.
This shows that bending energy scales with $1/\lambda$ when $\varGamma$ is re-scaled by a factor $\lambda >0$. Thus, without any additional constraints, minimizers of this energy do not exist (the energy of $\gamma_m$ converges to $0$ for $L \to \infty$). We therefore consider the following scale-invariant version $\energy\colon \Mcurves\to [0,\infty)$ of bending energy which is simply the $H^1$-seminorm:
\begin{equation}\label{eq:defi_energy}
\energy(\curve) \ceq \int_0^1 {\Angle'(t)}^2 \, \dd t.
\end{equation}

As mentioned above, $\energy(\curve)$ describes the energy required to deform a \emph{straight} elastic rod 
of length $L$ into $\varGamma$. More generally, consider an undeformed rest state $\varGamma_*$ of non-vanishing curvature (i.e., if $\varGamma_*$ is pre-curved). Assuming that $\varGamma_*$ is deformed into $\varGamma$ by a diffeomorphism $\varphi \colon \varGamma_* \to \varGamma$ 
that does not change the line element\footnote{Notice that for any two (sufficiently regular) planar curves of the \emph{same} total length $L$, there exists a diffeomorphism between them that preserves infinitesimal length at every point. In particular, such a mapping is not necessarily a Euclidean motion.}, bending energy is given by
\begin{equation*}
 \int_{\varGamma_*} (\kappa_*(s) - \kappa(\varphi(s)))^2\, \dd s .
\end{equation*}
Representing $\varGamma_*$ by 
$\curve_*=(\Angle_*,L,\BasePoint_*)$ as above, the scale-invariant version of this energy is given by 
\[
\energy(\curve, \curve_*) = \int_0^1(\Angle'(t)-\Angle_*'(t))^2\, \dd t.
\]
This formulation is useful when $\varGamma_*$ represents a reasonable initial guess that is further optimized in order to obtain the desired solution.

\paragraph*{Non-self-intersecting curves} 
When reconstructing a domain, one requires a boundary curve that is free of self-intersections. In this context, the following lemma is useful.
\begin{lemma}\label{lem:intersection}
The set of non-self-intersecting curves is open in the $\Xspace$-topology. 
\end{lemma}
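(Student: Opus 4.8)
The plan is to argue by contradiction, using sequential compactness together with the Sobolev embedding $H^1(\Interval,\R)\hookrightarrow C^0(\Interval,\R)$. Fix $\curve_0=(\Angle_0,L_0,\BasePoint_0)\in\Xspace$ whose associated curve $\gamma_{\curve_0}$ from \eqref{eq:CurveRepresentation} is non-self-intersecting (we read this as injectivity of $\gamma_{\curve_0}$ on $[0,1)$, equivalently as an embedding of $\Sb$ once the closing conditions hold; the case of injectivity on all of $\Interval$ is handled identically). If the claim were false, there would be a sequence $\curve_k=(\Angle_k,L_k,\BasePoint_k)\to\curve_0$ in $\Xspace$ such that each $\gamma_{\curve_k}$ has a self-intersection, i.e.\ there are parameters $s_k\neq t_k$ with $\gamma_{\curve_k}(s_k)=\gamma_{\curve_k}(t_k)$, and $\nabs{s_k-t_k}$ bounded away from $0$ would be excluded only in the limit.

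First I would record the convergence that comes for free. By the Sobolev embedding $\Angle_k\to\Angle_0$ uniformly, and since also $L_k\to L_0$ the velocity fields converge uniformly, $\gamma_{\curve_k}'=L_k(\cos\Angle_k,\sin\Angle_k)\to L_0(\cos\Angle_0,\sin\Angle_0)=\gamma_{\curve_0}'$ in $C^0$; integrating and using $\BasePoint_k\to\BasePoint_0$ gives $\gamma_{\curve_k}\to\gamma_{\curve_0}$ in $C^0$ as well. By compactness of the parameter domain, pass to a subsequence along which $s_k\to s_*$ and $t_k\to t_*$. Letting $k\to\infty$ in $\gamma_{\curve_k}(s_k)=\gamma_{\curve_k}(t_k)$ yields $\gamma_{\curve_0}(s_*)=\gamma_{\curve_0}(t_*)$, and since $\gamma_{\curve_0}$ is non-self-intersecting this forces $s_*=t_*$; in particular the parameter gap $\nabs{s_k-t_k}$ (measured along the shorter arc in the closed case) tends to $0$.

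The contradiction then comes from regularity of $\gamma_{\curve_0}$. Parametrising the short arc from $t_k$ to $s_k$ and writing the chord as the integral of the velocity, $0=\gamma_{\curve_k}(s_k)-\gamma_{\curve_k}(t_k)=\int_{t_k}^{s_k}\gamma_{\curve_k}'(\tau)\,\dd\tau$, so the mean value $\tfrac{1}{s_k-t_k}\int_{t_k}^{s_k}\gamma_{\curve_k}'(\tau)\,\dd\tau$ of the velocity over $[t_k,s_k]$ equals $0$. But this mean value converges to $\gamma_{\curve_0}'(s_*)$: its distance to $\gamma_{\curve_0}'(s_*)$ is at most $\norm{\gamma_{\curve_k}'-\gamma_{\curve_0}'}_{C^0}$ plus the oscillation of the continuous map $\gamma_{\curve_0}'$ over the interval $[t_k,s_k]$ shrinking to $\{s_*\}$, and both terms vanish in the limit. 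Hence $\gamma_{\curve_0}'(s_*)=0$, contradicting $\nabs{\gamma_{\curve_0}'}\equiv L_0>0$. Thus some $\Xspace$-neighbourhood of $\curve_0$ consists entirely of non-self-intersecting curves, which is the claim.

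I expect the only genuinely delicate point to be the opening reduction: fixing precisely in what sense a (possibly no longer closed) perturbation of $\gamma_{\curve_0}$ is ``non-self-intersecting'' and choosing the parameter domain ($\Interval$ versus $\Sb$) accordingly; once the curves are realised as $C^1$ maps and the embedding $H^1\hookrightarrow C^0$ is invoked, the compactness step is routine. Alternatively, one can give an effective version that avoids the contradiction argument: the uniform lower bound $\nabs{\gamma_{\curve}'}\equiv L$ together with the $C^{0,1/2}$-modulus of continuity of $\gamma_{\curve}'$, controlled by $L$ and $\norm{\Angle'}_{L^2}$ and hence uniform over a neighbourhood of $\curve_0$, furnishes fixed constants $\delta,c>0$ with $\nabs{\gamma_{\curve}(s)-\gamma_{\curve}(t)}\geq c\,\nabs{s-t}$ whenever $\nabs{s-t}\leq\delta$, while a positive lower bound for $\nabs{\gamma_{\curve_0}(s)-\gamma_{\curve_0}(t)}$ on the compact set $\{\nabs{s-t}\geq\delta\}$ persists under $C^0$-perturbations; together these exhibit an explicit neighbourhood of $\curve_0$ inside the non-self-intersecting set.
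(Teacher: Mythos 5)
Your argument is correct, but it takes a different route from the paper. The paper's proof is a two-line reduction to standard differential topology: each element of $\Mcurves$ represents a $C^1$-immersion $\gamma\colon\Sb\to\Rset^2$, injective immersions of compact domains are embeddings, the set of $C^1$-embeddings is open in $C^1(\Sb,\Rset^2)$ (citing Theorem~3.10 of \cite{MR0198479}), and the representation map is continuous into $C^1$ because $H^2$ embeds continuously into $C^1$. You instead give a self-contained compactness/contradiction argument: $\Xspace$-convergence gives $\gamma_{\curve_k}\to\gamma_{\curve_0}$ and $\gamma_{\curve_k}'\to\gamma_{\curve_0}'$ uniformly (only $H^1\hookrightarrow C^0$ for $\Angle$ is needed, since the constant-speed structure supplies the immersion condition $\nabs{\gamma'}\equiv L$ for free), limit intersection parameters must coincide by injectivity of $\gamma_{\curve_0}$, and the vanishing mean of $\gamma_{\curve_k}'$ over the shrinking parameter interval contradicts $\nabs{\gamma_{\curve_0}'}=L_0>0$; your quantitative variant (uniform $C^{0,1/2}$-control of $\gamma'$ giving a chord--arc bound near the diagonal, plus a persistent chord gap away from it) is essentially the proof of the cited openness-of-embeddings theorem specialized to this setting, and it yields an explicit neighbourhood. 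What each buys: the paper's route is shorter and transfers verbatim to any representation continuous into $C^1$; yours avoids the external reference, uses weaker regularity of the velocity, and is effective. The only point to pin down, which you correctly flag and which the paper resolves by working on $\Sb$ within $\Mcurves$, is the convention for ``non-self-intersecting'' (injectivity on $\Sb$ via the periodic extension versus on $\Interval$ for not-necessarily-closed perturbations); with that convention fixed, your wrap-around handling via the shorter arc is sound.
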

\begin{proof}
First notice that curves of finite bending energy correspond to elements of the Sobolev space $H^2(\Interval,\R^2)$. Furthermore, by construction, each member of $\Mcurves$ represents a $C^1$-immersion $\gamma \colon \Sb \to \Rset^2$; indeed, due to periodic boundary conditions we can take $\Sb$ as the domain for $\gamma$. Since injective immersions of compact domains are embeddings, we may employ Theorem~3.10 from \cite{MR0198479}, stating that the set of $C^1$-embeddings is open in $C^1(\Sb,\Rset^2)$. Now, the fact that $H^2(\Sb,\Rset^2)$ embeds continuously into $C^1(\Sb,\Rset^2)$ implies the result.
\end{proof}

\begin{remark}\label{rem:self_intersection} 
If a sufficiently good initial guess $\curve_*\in \Mcurves$ 
of the true solution is available and if $\curve_*$ is free of self-intersections, then Lemma \ref{lem:intersection} ensures 
that we can choose 
\begin{equation}\label{eq:M0ball}
\Mcurves_0 \ceq \set{ \curve\in\Mcurves | \|\curve-\curve_*\|_{\Xspace}\leq \delta }
\end{equation}
containing only non-self-intersecting curves. 
In the context of inverse obstacle scattering problems such an initial guess 
can often be constructed by sampling methods as discussed in the introduction and in Section~\ref{sec:sampling}. 
\end{remark}

Although we have not encountered the problem of self-intersections in practice for our method, we briefly outline how to avoid this issue whenever needed. A~popular and widely studied energy that is \emph{self-avoiding} (i.e., finite energy guarantees that the curve is free of self-intersections) is the so-called \emph{M\"{o}bius energy} defined as 
\begin{align}\label{eq:MobiusEnergy}
\penetrEnerg (\varGamma)\ceq
	\int_\varGamma \int_\varGamma
	\biggparen{
	\frac{1}{|x-y|^2 } - \frac{1}{d_\varGamma (x,y)^2}
	} \, \dd s(x) \, \dd s(y) ,
\end{align}
where $d_\varGamma (x,y)$ denotes the geodesic distance between $x$ and $y$ along $\varGamma$ and integration is performed with respect to the line elements.
This parameterization-invariant energy was introduced by  O'Hara \cite{MR1098918} and its analytical properties have been studied by several authors \cite{MR2887901,MR3461038,MR1259363,MR1733697,MR1470748,MR1702037}. 
The self-avoiding property is ensured by the first summand of the integrand, while the second summand is introduced in order to remove the singularity along the diagonal $x=y$. 
The M\"{o}bius energy is invariant under M\"{o}bius transformations (i.e., under conformal transformations of $\C\cong \R^2$) and thus in particular scale-invariant. 
We will show in Section~\ref{sec:tikhonov} that 
using the M\"{o}bius energy as an additional penalty term ensures that minimizers of the regularized problem are indeed free of self-intersections.

\paragraph*{Properties of the energy functionals} 
The analysis of well-posedness and convergence properties of Tikhonov regularization 
in Section \ref{sec:tikhonov} requires  
some properties of the energy functionals $\energy$ and 
$\penetrEnerg$ on the Riemannian manifold $\Mcurves$.
For showing existence of solutions via the direct method of the calculus of variations, 
weakly sequential lower semi-continuity of the objective functional is a desirable property.
Weak convergence, however, is a concept that is \emph{not} invariant under nonlinear changes of coordinates. 
Because we parameterized $\Mcurves$ as in \eqref{eq:CurveRepresentation}, the bending energy becomes a \emph{convex quadratic functional}, enabling us to derive the following result.

\begin{proposition}\label{prop:energy}
Let $\cE \in \set{\energy, \energy(\cdot, m_*)}$. With respect to the $\Xspace$-topology, we have:
\begin{enumerate}[(i)]
\item \label{it:energy:Mclosed} $\Mcurves\subset\Xspace$ is weakly sequentially closed.
\item\label{it:energy:Elsc}  $\cE$ is weakly sequentially lower semi-continuous. 
\item\label{it:energy:Elscompact} 
Modulo shifts by elements of $2 \,\pi \, \Z$ , the sublevel sets $\cE^{-1}([0,a])\subset \Mcurves$ are weakly sequentially compact. 
\end{enumerate}
\end{proposition}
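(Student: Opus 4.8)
The plan is to prove the three items in sequence, with the one‑dimensional compact Sobolev embedding $H^1(\Interval,\R)\hookrightarrow C(\Interval,\R)$ as the main tool. The single fact I will use repeatedly is: if $\Angle_k\rightharpoonup\Angle$ weakly in $H^1(\Interval,\R)$, then $\Angle_k\to\Angle$ \emph{uniformly} on $\Interval$. Indeed, a weakly convergent sequence is bounded in $H^1$, hence precompact in $C(\Interval,\R)$; and since the point evaluations $\delta_t$ are bounded functionals on $H^1(\Interval,\R)$, every uniform limit point agrees pointwise with $\Angle$, so the whole sequence converges uniformly to $\Angle$. I will also use that weak convergence $\curve_k\rightharpoonup\curve$ in $\Xspace$ is equivalent to $\Angle_k\rightharpoonup\Angle$ in $H^1(\Interval,\R)$ together with $L_k\to L$ and $\BasePoint_k\to\BasePoint$, because the three coordinate projections of $\Xspace$ are bounded and linear (hence map weakly convergent sequences to weakly convergent ones) and the last two target spaces are finite‑dimensional.

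For item \ref{it:energy:Mclosed}, let $\curve_k=(\Angle_k,L_k,\BasePoint_k)\in\Mcurves$ with $\curve_k\rightharpoonup\curve=(\Angle,L,\BasePoint)$. Since $[L_1,L_2]$ and $B$ are closed, $L\in[L_1,L_2]$ and $\BasePoint\in B$. To see $\Angle\in\AngleSpace$, use uniform convergence $\Angle_k\to\Angle$: then $\cos\Angle_k\to\cos\Angle$ and $\sin\Angle_k\to\sin\Angle$ uniformly, so $\int_0^1(\cos\Angle_k,\sin\Angle_k)\,\dd t\to\int_0^1(\cos\Angle,\sin\Angle)\,\dd t=0$, and point evaluations give $\Angle(1)-\Angle(0)=\lim_k(\Angle_k(1)-\Angle_k(0))=2\pi$; hence $\curve\in\Mcurves$. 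For item \ref{it:energy:Elsc}, both candidate functionals depend on $\curve$ only through $\Angle'$, equalling $\norm{\Angle'}_{L^2}^2$, respectively $\norm{\Angle'-\Angle_*'}_{L^2}^2$ with $\Angle_*$ fixed. The map $\Angle\mapsto\Angle'$ is bounded and linear from $H^1(\Interval,\R)$ to $L^2(\Interval,\R)$, so $\curve_k\rightharpoonup\curve$ implies $\Angle_k'\rightharpoonup\Angle'$ in $L^2$; and $\phi\mapsto\norm{\phi}_{L^2}^2$ and $\phi\mapsto\norm{\phi-\Angle_*'}_{L^2}^2$ are convex and norm‑continuous on $L^2$, hence weakly sequentially lower semicontinuous, whence $\cE(\curve)\le\liminf_k\cE(\curve_k)$.

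For item \ref{it:energy:Elscompact}, note first that $\cE$ bounds only $\Angle'$ in $L^2$, not $\Angle$ itself, so $\cE^{-1}([0,a])$ need not be bounded in $\Xspace$; the constant gauge $\Angle\mapsto\Angle+2\pi n$, under which both the defining constraints of $\AngleSpace$ and the value of $\cE$ are invariant, is exactly what repairs this. Given $\curve_k=(\Angle_k,L_k,\BasePoint_k)$ with $\cE(\curve_k)\le a$, we have $\norm{\Angle_k'}_{L^2}\le\sqrt a$, and from $\Angle_k(t)-\Angle_k(0)=\int_0^t\Angle_k'$ together with the fact that $\Interval$ has unit length, $\nnorm{\Angle_k-\Angle_k(0)}_{\infty}\le\norm{\Angle_k'}_{L^1}\le\norm{\Angle_k'}_{L^2}\le\sqrt a$. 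Choose $n_k\in\Z$ with $\Angle_k(0)+2\pi n_k\in[0,2\pi)$ and set $\tilde\Angle_k\ceq\Angle_k+2\pi n_k$, so $\tilde\curve_k\ceq(\tilde\Angle_k,L_k,\BasePoint_k)\in\Mcurves$, $\cE(\tilde\curve_k)=\cE(\curve_k)\le a$, and $\nnorm{\tilde\Angle_k}_{\infty}\le 2\pi+\sqrt a$. Combined with $\norm{\tilde\Angle_k'}_{L^2}\le\sqrt a$, $L_k\in[L_1,L_2]$ and $\BasePoint_k\in B$, the sequence $(\tilde\curve_k)$ is bounded in the Hilbert space $\Xspace$, hence has a weakly convergent subsequence $\tilde\curve_{k_j}\rightharpoonup\curve$. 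By item \ref{it:energy:Mclosed}, $\curve\in\Mcurves$; by item \ref{it:energy:Elsc}, $\cE(\curve)\le\liminf_j\cE(\tilde\curve_{k_j})\le a$. Thus $\curve\in\cE^{-1}([0,a])$, which is the asserted weak sequential compactness modulo $2\pi\Z$.

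The only genuine obstacle is the $H^1$‑bound in item \ref{it:energy:Elscompact}: without passing to the quotient it simply fails, and the role of the shift freedom $\Angle\mapsto\Angle+2\pi n$ is precisely to normalize $\Angle_k(0)$ and thereby restore boundedness in $\Xspace$. Items \ref{it:energy:Mclosed} and \ref{it:energy:Elsc} are then routine, relying on the compact embedding $H^1\hookrightarrow C^0$ to pass to the limit in the (nonlinear but continuous) closing constraints, and on weak lower semicontinuity of the $L^2$‑norm.
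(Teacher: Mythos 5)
Your proof is correct and takes essentially the same route as the paper's: the compact embedding $H^1(\Interval,\R)\hookrightarrow C(\Interval,\R)$ to pass to the limit in the closing conditions for (i), convexity plus norm-continuity of the squared (semi)norm for (ii), and normalization of $\Angle_k(0)$ by $2\pi\Z$-shifts to restore an $H^1$-bound for (iii). The only cosmetic slip is that for $\cE=\energy(\cdot,\curve_*)$ the sublevel bound yields $\|\Angle_k'\|_{L^2}\leq\sqrt{a}+\|\Angle_*'\|_{L^2}$ rather than $\sqrt{a}$, which changes nothing in the argument.
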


\begin{proof}
We proceed in the usual manner of the direct method of calculus of variations.
In order to show (i), consider a sequence $(\curve_n = (\Angle_n,L_n,\BasePoint_n))_{n \in \N}$ in $\Mcurves$ that converges weakly 
to some $\curve=(\Angle,L,\BasePoint)\in\Mcurves$. 
By the Rellich compactness theorem, $H^1(\Interval,\R)$ is 
compactly embedded in $C(\Interval,\R)$ equipped with the supremum norm.
Thus, weak convergence of $\Angle_n \rightharpoonup \Angle$ in $H^1(\Interval,\R)$ implies strong convergence in $C(\Interval,\R)$.
Since the closing conditions \eqref{eq:ClosingConditions2} are continuous on $C(\Interval,\R)$, this implies that $\Angle \in \AngleSpace$ and thus $\curve \in \Mcurves$.
\\
In order to show (ii), notice that  $\cE$ is defined in terms of a squared seminorm on~$\Xspace$, which is a continuous and convex functional, whose sublevel sets are therefore sequentially closed and convex. The fact that sequentially closed convex sets are 
weakly sequentially closed implies (ii).
\\
For showing (iii), we first observe that for each $z \in 2 \,\pi \, \Z$, the curve represented by $(\Angle + z,L,\BasePoint)$
is the same as the one represented by $(\Angle,L,\BasePoint)$.  
Now let $m_n = (\Angle_n,L_n,\BasePoint_n)$ be a sequence in a sublevel set $\cE^{-1}([0,a])$.
Modulo shifting by $z_n \in 2\, \pi \, \Z$, we may assume that $\Angle_n(0) \in \intervalcc{0,2\pi}$.
We may define an equivalent norm on $H^1(\Interval,\R)$ by
$\|\Angle\|_* \ceq \nabs{\Angle(0)} + \nnorm{\Angle'}_{L^2}$.
We then either have
$\nnorm{\Angle_n}_{H^1} \leq 2 \, \pi + \sqrt{\cE(\Angle_n)}$ (for the case of $\cE = \energy$)
or 
$\nnorm{\Angle_n - \Angle_*}_{H^1} \leq 2 \, \pi + |\Angle_*(0)| + \sqrt{\cE(\Angle_n)}$ (for the case of $\cE = \energy(\cdot, m_*)$).
In either case, the sequence $(\curve_n)_{n \in \N}$ is bounded in $H^1$, and hence it has a subsequence 
$(\Angle_{n_k})$ converging weakly to some $\Angle \in H^1(\Interval,\R)$. 
Moreover, $\intervalcc{L_1,L_2} \times B$ is compact so that we may find a further subsequence so that $m_{n_k}$ converges weakly to some $m = (\Angle, L ,\BasePoint) \in H^1(\Interval,\R) \times \intervalcc{L_1,L_2} \times B$.
Because of (ii), we have $\cE(m) \leq a$ and because of (i), $m$ is indeed an element of $\Mcurves$.
\end{proof}

\begin{lemma}\label{lem:lsc_penetrEnerg}
The M\"{o}bius energy $\penetrEnerg \colon \Mcurves\to [0,\infty]$ defined by \eqref{eq:MobiusEnergy}
is weakly sequentially lower semi-continuous with respect to the weak topology of $\Xspace$.  
\end{lemma}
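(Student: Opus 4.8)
The plan is to reduce the statement about $\penetrEnerg$ to a Fatou-type argument once we have a suitable pointwise-convergence statement for the relevant integrand along a weakly convergent sequence in $\Xspace$. So suppose $\curve_n = (\Angle_n, L_n, \BasePoint_n) \rightharpoonup \curve = (\Angle, L, \BasePoint)$ weakly in $\Xspace$. As in the proof of Proposition \ref{prop:energy}, weak convergence in $H^1(\Interval,\R)$ implies, by Rellich, strong convergence $\Angle_n \to \Angle$ in $C(\Interval,\R)$; together with $L_n \to L$ and $\BasePoint_n \to \BasePoint$ this gives, via the representation \eqref{eq:CurveRepresentation}, that the constant-speed parameterizations $\gamma_{\curve_n} \to \gamma_{\curve}$ in $C^1(\Interval,\R^2)$ and that $\gamma_{\curve_n}' \to \gamma_{\curve}'$ uniformly. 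In particular the speeds $|\gamma_{\curve_n}'| = L_n$ converge to $L \in [L_1, L_2] > 0$, so the line elements are uniformly comparable.

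The key step is to rewrite $\penetrEnerg$ on a fixed parameter domain and identify a pointwise-convergent, uniformly-bounded-below family of integrands. Using the constant-speed parameterization, set
\begin{align*}
	f_n(s,t) \ceq \biggparen{ \frac{1}{\nabs{\gamma_{\curve_n}(s) - \gamma_{\curve_n}(t)}^2} - \frac{1}{d_n(s,t)^2} } \, L_n^2,
\end{align*}
where $d_n(s,t) = L_n \min\{|s-t|, 1-|s-t|\}$ is the geodesic distance along $\varGamma_n$ (here I use that constant-speed arclength distance is $L_n$ times the $\Sb$-distance of parameters), so that $\penetrEnerg(\varGamma_n) = \int_0^1\!\int_0^1 f_n(s,t)\,\dd s\,\dd t$. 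O'Hara's classical pointwise estimate (the integrand of the M\"obius energy is nonnegative for any rectifiable curve, by convexity of arclength versus chord length) gives $f_n \geq 0$ a.e., which is exactly what Fatou needs. For the pointwise limit: off the diagonal $\{s=t\}$, $C^1$-convergence $\gamma_{\curve_n} \to \gamma_{\curve}$ gives $\nabs{\gamma_{\curve_n}(s)-\gamma_{\curve_n}(t)} \to \nabs{\gamma_{\curve}(s)-\gamma_{\curve}(t)}$, which is nonzero since $\gamma_{\curve}$ is injective; and $d_n(s,t) \to d(s,t) = L\min\{|s-t|,1-|s-t|\} > 0$; and $L_n^2 \to L^2$. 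Hence $f_n(s,t) \to f(s,t)$ for a.e.\ $(s,t)$, where $f$ is the integrand for $\varGamma$. Fatou's lemma then yields
\begin{align*}
	\penetrEnerg(\varGamma) = \int_0^1\!\int_0^1 f \,\dd s\,\dd t \leq \liminf_{n\to\infty} \int_0^1\!\int_0^1 f_n \,\dd s\,\dd t = \liminf_{n\to\infty} \penetrEnerg(\varGamma_n),
\end{align*}
which is the claimed lower semicontinuity.

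I expect two points to require the most care. First, the nonnegativity of the M\"obius integrand: one should cite O'Hara's work (or \cite{MR1259363}) for the fact that for a rectifiable loop the chord length never exceeds the arclength distance, so $\frac{1}{|x-y|^2} \geq \frac{1}{d_\varGamma(x,y)^2}$ pointwise — this is what licenses Fatou's lemma and must be invoked rather than re-proved. Second, the limit $\gamma_{\curve}$ must be a genuine injective $C^1$-immersion for the a.e.\ pointwise convergence of $f_n$ to be meaningful; here one uses that $\curve \in \Mcurves$ by Proposition \ref{prop:energy}(i), so $\gamma_{\curve}$ is a $C^1$-immersion, but injectivity need not hold for an arbitrary element of $\Mcurves$. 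If $\gamma_{\curve}$ is \emph{not} injective, then $d_\varGamma(x,y)$ is not even well defined and $\penetrEnerg(\varGamma) = +\infty$ by convention (self-intersecting curves have infinite M\"obius energy); in that case the inequality is not automatic and one argues instead that $\penetrEnerg(\varGamma_n) \to +\infty$: if $\gamma_{\curve}(s_0)=\gamma_{\curve}(t_0)$ with $s_0 \neq t_0$ not conjugate on $\Sb$, then near $(s_0,t_0)$ the first summand $1/|\gamma_{\curve_n}(s)-\gamma_{\curve_n}(t)|^2$ blows up uniformly on a shrinking neighborhood while the subtracted term $1/d_n(s,t)^2$ stays bounded, forcing $\liminf_n \penetrEnerg(\varGamma_n) = \infty$. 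Making this blow-up quantitative (a lower bound for the double integral over a neighborhood of the self-intersection, uniform in $n$ for $n$ large, using the $C^1$-convergence) is the main technical obstacle, but it is exactly the standard self-avoidance mechanism of the M\"obius energy and can be handled by a local coordinate computation.
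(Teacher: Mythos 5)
Your proof is correct in outline but takes a genuinely different route from the paper's. The paper never touches the integrand: it notes that \eqref{eq:CurveRepresentation} maps $\Mcurves$ smoothly into $H^2(\Sb,\R^2)$, invokes the continuity (indeed continuous differentiability) of $\penetrEnerg$ on embedded curves of class $C^{0,1}\cap H^{3/2}$ from \cite{MR3461038}, normalizes $L_n=L=1$ and $\BasePoint_n=\BasePoint=0$ by scale and translation invariance, and uses compactness of $H^2\hookrightarrow C^{0,1}\cap H^{3/2}$ to pass to a subsequence realizing the liminf along which the parameterizations converge strongly, so that continuity yields the claim when the liminf is finite; it is short but leans on a nontrivial external regularity result. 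Your route---nonnegativity of the Möbius integrand plus a.e.\ pointwise convergence from the $C^1$-convergence $\gamma_{\curve_n}\to\gamma_{\curve}$, then Fatou---is more elementary and self-contained, at the cost of your case distinction at the end, and there you make things harder than necessary: the ``uniform in $n$'' blow-up estimate you flag as the main obstacle can be skipped. Define $f$ for the limit by the same parameterized formula whether or not $\gamma_{\curve}$ is injective; since $\nabs{\gamma_{\curve}'}\equiv L>0$, level sets of $\gamma_{\curve}$ are Lebesgue-null (density-point argument), hence by Fubini the off-diagonal coincidence set $\{\gamma_{\curve}(s)=\gamma_{\curve}(t),\,s\neq t\}$ is null in $[0,1]^2$, so $f_n\to f$ a.e.\ and Fatou gives $\iint f\le\liminf_n\penetrEnerg(\curve_n)$ in every case. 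If $\gamma_{\curve}$ is embedded this is the claim; if not, the bound $\nabs{\gamma_{\curve}(s)-\gamma_{\curve}(t)}\le L(|s-s_0|+|t-t_0|)$ near a self-intersection with $s_0\ne t_0$ on $\Sb$, together with boundedness of the geodesic term there, shows $\iint f=\infty=\penetrEnerg(\curve)$ by the paper's convention, so the inequality holds trivially. (Two minor points: some $\curve_n$ may themselves be non-embedded, in which case the parameterized double integral is also $+\infty$, so your identification $\penetrEnerg(\curve_n)=\iint f_n$ stays consistent; and the nonnegativity of the integrand is simply chord length $\le$ arclength distance, as in \cite{MR1098918,MR1259363}.) With these adjustments your plan closes completely.
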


\begin{proof}
Recall that \eqref{eq:CurveRepresentation} constitutes a smooth mapping from $\Mcurves$ to $H^2(S^1,\R^2)$.
As shown in \cite{MR3461038}, the M\"{o}bius energy is continuously differentiable (and thus continuous) 
on the space of embeddings of class $C^{0,1}(S^1,\R^2) \cap H^{3/2}(S^1,\R^2)$. 
Now the statement follows from the compactness of the embedding of $H^2(S^1,\R^2)$ into this space.
More precisely, let $m_n$, $m \in \Mcurves$ with $m_n \rightharpoonup m$.
We have to show that $\penetrEnerg(m) \leq c \ceq \liminf_{n \to \infty} \penetrEnerg(m_n)$.
In the case of $c = \infty$, there is nothing to show, so assume that
$c < \infty$.
Since~$\penetrEnerg$ is invariant under scaling and translation, we may assume that $L_n = L = 1$ and $\BasePoint_n = \BasePoint = 0$.
Denote by $\gamma_n$, $\gamma \in H^2(S^1,\R^2)$ the corresponding parameterizations.
Due to the Rellich embedding,
we may pick a subsequence such that $c= \lim_{k \to \infty} \penetrEnerg(m_{n_k})$ and such that $\gamma_{n_k} \to \gamma$ strongly in $C^{0,1}\cap H^{3/2}$.
The latter shows that 
\begin{align*}
	\penetrEnerg(\curve) 
	= \lim_{k \to \infty} \penetrEnerg(\curve_{n_k}) = c,
\end{align*}
which proves the claim.
\end{proof}


\section{Tikhonov regularization}\label{sec:tikhonov}
In this section we consider a general injective operator 
\[
F \colon \Mcurves_0\subset\Mcurves\to \Yspace
\]
mapping a set of embedded curves $\Mcurves_0$
into a Hilbert space $\Yspace$. 
The unknown exact solution will be denoted by 
$\xdag\in\Mcurves_0$. 
Noisy data is described by a vector $\data^{\delta}\in\Yspace$  satisfying 
\[
\|\data^{\dagger}-F(\xdag)\|_{\Yspace}\leq \delta.
\] 

In order to approximately recover $\xdag$ from the data $\data^{\delta}$,
we use Tikhonov regularization with some regularization parameter
$\alpha>0$:
\begin{equation}\label{eq:defi_tikh}
\curve_{\alpha}^{\delta}\in \argmin_{\curve\in\Mcurves_0}
\bracket{\|F(\curve)-\data^{\delta}\|_{\Yspace}^2 + \alpha \, \energy(\curve,\curve_*)}.
\end{equation}
Here $\curve_*$ denotes an initial guess of $\xdag$ and may be set to $0$ 
if no such initial guess is available. 
If no appropriate submanifold of embedded curves $\Mcurves_0$ containing 
the true solution is known, we may choose $\Mcurves_0$ by 
setting $\Mcurves_0\ceq \set{\curve\in\Mcurves | \penetrEnerg(\curve) \leq c}$ for sufficiently large $c >0$ 
or alternatively consider Tikhonov regularization of the form 
\begin{equation}\label{eq:defi_tikh_with_penetr}
\curve_{\alpha}^{\delta}\in \argmin_{\curve\in\Mcurves}
\bracket{\|F(\curve)-\data^{\delta}\|_{\Yspace}^2 + \alpha \, \energy(\curve,\curve_*) 
+\alpha \, \penetrEnerg(\curve)}.
\end{equation}
Since $\penetrEnerg(\curve)= \infty$ if $\curve$ is 
self-intersecting,
$\penetrEnerg$ acts as a barrier function: Only the values of $F$ on the set of embedded curves are relevant
and each curve $\gamma_{\curve_{\alpha}^{\delta}}$ is guaranteed to be embedded. 

With the properties of the energy functionals established in the previous section, 
the following convergence properties follow from 
the general theory of nonlinear Tikhonov regularization. 

\begin{theorem}\label{theo:main}
Assume that $\Mcurves_0\subset\Mcurves$ contains only non-self-intersecting elements
and let $\xdag\in\Mcurves_0$. 
Suppose that $F \colon \Mcurves_0\to \Yspace$ is weakly 
sequentially continuous (with respect to the topologies of $\Xspace$ and $\Yspace$) 
and injective and $\Mcurves_0$ is weakly closed in the case of \eqref{eq:defi_tikh}. 
\begin{enumerate}
\item \emph{(existence)}
The infimum of the Tikhonov functionals 
in \eqref{eq:defi_tikh} and \eqref{eq:defi_tikh_with_penetr} 
is attained for any $\alpha>0$.
\item \emph{(regularizing property)} 
Suppose that $F$ is  injective. 
Moreover, consider a sequence of data $(\data^{\delta_n})$ with 
$\|\data^{\delta_n}-F(\xdag)\|\leq \delta_n\to 0$ 
as $n\to\infty$. 
Assume that the regularization parameters are chosen such that
\[
\alpha_n\to 0 \qquad  \mbox{and}\qquad 
\frac{\delta_n}{\sqrt{\alpha_n}}\to 0\,.
\]
Then for any sequence of minimizers of the Tikhonov functionals we have
\begin{align}
\label{eq:further_conv}
&\lim_{n\to\infty} \bignorm{\curve_{\alpha_n}^{\delta_n}-\xdag}_{\Xspace} 
= \lim_{n\to\infty} \bignorm{\gamma_{\curve_{\alpha_n}^{\delta_n}}-\gamma_{\xdag} }_\infty
= 0,\\
\label{eq:conv_Y}
	&\lim_{n\to\infty} \bignorm{
		F\bigparen{\curve_{\alpha_n}^{\delta_n}}- F\bigparen{\xdag}
	}_{\Yspace} = 0.
\end{align}
\item \emph{(convergence rates)}
Suppose in the case of \eqref{eq:defi_tikh} that 
there exists a loss function $l \colon \Mcurves\times\Mcurves \to [0,\infty)$ 
and a concave, 
increasing function $\varphi \colon [0,\infty)\to [0,\infty)$ with $\varphi(0)=0$ 
such that $\xdag$ satisfies the variational source condition 
\begin{equation}\label{eq:vsc}
l(\curve,\xdag) \leq \energy(\curve,\curve_*)-\energy(\xdag,\curve_*) 
+ \varphi \bigparen{\|F(\curve)-F(\xdag)\|_{\Yspace}^2}
\end{equation}
for all $\curve\in \Mcurves_0$. Then the reconstruction error for an optimal 
choice $\overline{\alpha}$ of $\alpha$ is bounded by
\begin{equation}\label{eq:conv_rate}
l\bigparen{\curve_{\overline{\alpha}}^{\delta},\xdag}\leq 2 \, \varphi(\delta^2).
\end{equation}
\end{enumerate}
\end{theorem}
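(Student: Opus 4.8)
The plan is to derive all three parts from the abstract theory of nonlinear Tikhonov regularization, feeding in the structural facts already established: $\Mcurves\subset\Xspace$ is weakly sequentially closed, $\energy(\cdot,\curve_*)$ and $\penetrEnerg$ are weakly sequentially lower semicontinuous, and (modulo $2\pi\Z$-shifts) the sublevel sets of $\energy(\cdot,\curve_*)$ are weakly sequentially compact (Proposition~\ref{prop:energy}, Lemma~\ref{lem:lsc_penetrEnerg}), together with the weak sequential continuity and injectivity of $F$. A standing convention: since $\gamma_\curve$, $\energy$, $\penetrEnerg$ and $F$ depend on $\curve$ only modulo the $2\pi\Z$-shift in the angle variable, one passes throughout to the canonical representative with $\Angle(0)\in[0,2\pi)$; and a minimizing sequence may accumulate at a corner of $\Mcurves$ (i.e.\ $L\in\{L_1,L_2\}$ or $\BasePoint\in\bnd B$), which is harmless because $[L_1,L_2]\times B$ is compact.

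For \emph{existence} I would run the direct method. Along a minimizing sequence $(\curve_n)$ for \eqref{eq:defi_tikh} or \eqref{eq:defi_tikh_with_penetr}, boundedness of the Tikhonov functional forces $\energy(\curve_n,\curve_*)$ (and, in the case of \eqref{eq:defi_tikh_with_penetr}, also $\penetrEnerg(\curve_n)$) to be bounded, so by Proposition~\ref{prop:energy} there is a subsequence converging weakly in $\Xspace$ to some $\hat\curve$; weak closedness of $\Mcurves$ together with weak closedness of $\Mcurves_0$ — respectively weak lower semicontinuity of $\penetrEnerg$, which keeps $\hat\curve$ embedded — places $\hat\curve$ in the feasible set. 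Weak sequential lower semicontinuity of $\|F(\cdot)-\data^\delta\|_{\Yspace}^2$ (from weak continuity of $F$ and weak lower semicontinuity of the squared norm), of $\energy(\cdot,\curve_*)$ (Proposition~\ref{prop:energy}), and of $\penetrEnerg$ (Lemma~\ref{lem:lsc_penetrEnerg}) then shows that $\hat\curve$ is a minimizer.

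For the \emph{regularizing property} I would start from the comparison inequality $\|F(\curve_{\alpha_n}^{\delta_n})-\data^{\delta_n}\|_{\Yspace}^2+\alpha_n\energy(\curve_{\alpha_n}^{\delta_n},\curve_*)\le\delta_n^2+\alpha_n\energy(\xdag,\curve_*)$, which yields both $\|F(\curve_{\alpha_n}^{\delta_n})-\data^{\delta_n}\|_{\Yspace}^2\le\delta_n^2+\alpha_n\energy(\xdag,\curve_*)\to0$ — hence \eqref{eq:conv_Y} by the triangle inequality with $\|\data^{\delta_n}-F(\xdag)\|_{\Yspace}\le\delta_n$ — and a uniform bound $\energy(\curve_{\alpha_n}^{\delta_n},\curve_*)\le\energy(\xdag,\curve_*)+\delta_n^2/\alpha_n$. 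From the latter, any subsequence has a further subsequence converging weakly in $\Xspace$; weak continuity of $F$ together with \eqref{eq:conv_Y} forces its $F$-image to equal $F(\xdag)$, so injectivity identifies the limit as $\xdag$, and therefore the whole sequence converges weakly to $\xdag$. To obtain the strong convergence in \eqref{eq:further_conv} I would note that $\limsup_n\energy(\curve_{\alpha_n}^{\delta_n},\curve_*)\le\energy(\xdag,\curve_*)$ by the comparison inequality (using $\delta_n^2/\alpha_n\to0$) while $\liminf_n\energy(\curve_{\alpha_n}^{\delta_n},\curve_*)\ge\energy(\xdag,\curve_*)$ by weak lower semicontinuity, so the energies converge; since $\energy(\cdot,\curve_*)$ is the square of a seminorm on the Hilbert space $\Xspace$, convergence of the values plus weak convergence of the arguments upgrades to strong $L^2$-convergence of the angle derivatives, and together with the compact Rellich embedding $H^1(\Interval,\R)\hookrightarrow C(\Interval,\R)$ and convergence of the finite-dimensional components $L_n,\BasePoint_n$ this gives $\curve_{\alpha_n}^{\delta_n}\to\xdag$ in $\Xspace$. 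Finally $\|\gamma_{\curve_{\alpha_n}^{\delta_n}}-\gamma_{\xdag}\|_\infty\to0$ follows since \eqref{eq:CurveRepresentation} depends continuously on $\curve$ (already with respect to $C(\Interval,\R)\times\R\times\R^2$).

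For the \emph{convergence rates} I would insert the variational source condition \eqref{eq:vsc} at $\curve=\curve_{\overline\alpha}^\delta$ and combine it with the minimality bound $\overline\alpha\bigl(\energy(\curve_{\overline\alpha}^\delta,\curve_*)-\energy(\xdag,\curve_*)\bigr)\le\delta^2-R$, where $R=\|F(\curve_{\overline\alpha}^\delta)-\data^\delta\|_{\Yspace}^2$, to obtain $\overline\alpha\,l(\curve_{\overline\alpha}^\delta,\xdag)\le\delta^2-R+\overline\alpha\,\varphi\bigl(\|F(\curve_{\overline\alpha}^\delta)-F(\xdag)\|_{\Yspace}^2\bigr)$; estimating $\|F(\curve_{\overline\alpha}^\delta)-F(\xdag)\|_{\Yspace}^2\le2R+2\delta^2$, exploiting concavity and subadditivity of $\varphi$ with $\varphi(0)=0$, and making the standard optimal a priori choice of $\overline\alpha$ then yields \eqref{eq:conv_rate}. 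This last step is precisely the general Tikhonov convergence-rate theorem under variational source conditions, which I would invoke rather than reprove. The only genuinely non-routine point I anticipate is the weak-to-strong promotion in \eqref{eq:further_conv} and the attendant $2\pi\Z$-shift bookkeeping; everything else is a routine assembly of Proposition~\ref{prop:energy}, Lemma~\ref{lem:lsc_penetrEnerg}, and the abstract regularization theory.
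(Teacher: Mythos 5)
Your proposal is correct and follows essentially the same route as the paper's proof: verify the weak closedness, lower semicontinuity and (modulo $2\pi\Z$-shifts) sublevel compactness of the penalties together with weak sequential continuity of $F$, run (or cite) the standard nonlinear Tikhonov arguments for existence, \eqref{eq:conv_Y}, weak convergence and convergence of the penalty values, and then upgrade to strong convergence --- your Radon--Riesz-plus-Rellich step is equivalent to the paper's explicit expansion of $\|(\Angle_n-\Angle^{\dagger})'\|_{L^2}^2$ combined with the equivalent norm $\nabs{\Angle(0)}+\nnorm{\Angle'}_{L^2}$. The only detail to add is that in the case \eqref{eq:defi_tikh_with_penetr} the comparison inequality must carry the term $\alpha_n\,\penetrEnerg$ as well, after which weak sequential lower semicontinuity of both $\energy(\cdot,\curve_*)$ and $\penetrEnerg$ splits the convergence of the summed penalty into convergence of $\energy(\cdot,\curve_*)$ alone, exactly as in the paper.
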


\begin{proof}
We define a functional $\totEnerg \colon \Xspace\to [0,\infty)$ by
\begin{align*}
&\totEnerg(\curve) \ceq \begin{cases}
\energy(\curve,\curve_*),&\mbox{if }\curve\in\Mcurves_0,\\
\infty,&\mbox{else}
\end{cases}
\quad\mbox{or}\quad\\
&\totEnerg(\curve) \ceq \begin{cases}
\energy(\curve,\curve_*)+\penetrEnerg(\curve),&\mbox{if }\curve\in\Mcurves,\\
\infty,&\mbox{else}
\end{cases}
\end{align*}
in the case of \eqref{eq:defi_tikh} or \eqref{eq:defi_tikh_with_penetr}, respectively. 
We show that in both cases $\totEnerg$ is weakly sequentially lower semi-compact, 
i.e.\ sublevel-sets of $\totEnerg$ are weakly sequentially compact. 
In the first case this follows from Proposition \ref{prop:energy}, 
part~(\ref{it:energy:Elscompact}) and the assumption that $\Mcurves_0$ is weakly 
sequentially closed. In the second case this is a straightforward consequence 
of  Proposition~\ref{prop:energy}, part~(\ref{it:energy:Elscompact}) and 
Lemma \ref{lem:lsc_penetrEnerg}.

Extending $F$ to an operator $\widetilde{F} \colon \Xspace\to\Yspace$ 
in an arbitrary fashion, we can formally write the Tikhonov regularizations 
\eqref{eq:defi_tikh} and \eqref{eq:defi_tikh_with_penetr} 
as a minimization problem over $\Xspace$, 
\[
\curve_{\alpha}^{\delta}\in \argmin_{\curve\in\Xspace}
\bracket{\|\widetilde{F}(\curve)-\data^{\delta}\|_{\Yspace}^2 
+ \alpha \, \totEnerg(\curve)}.
\]
and apply standard convergence results for generalized Tikhonov regularization. 
The first statement now follows from \cite[Theorems 3.22]{Scherzer_etal:09} or 
\cite[Theorems 3.2]{flemming:12b}. 

To prove the second statement, 
let $\xdag=(\Angle^\dagger,L^{\dagger},\BasePoint^\dagger)$ and 
$\curve_{\alpha_n}^{\delta_n} = (\Angle_n,L_n,\BasePoint_{n})$ and 
recall from \cite[Theorem 3.26]{Scherzer_etal:09} or \cite[Theorem 3.4]{flemming:12b} that \eqref{eq:conv_Y} holds true, and 
for an injective operator we have 
weak convergence of $\curve_{\alpha_n}^{\delta_n}$ to $\xdag$ as well as 
$\lim_{n\to\infty}\totEnerg(\curve_{\alpha_n}^{\delta_n}) = \totEnerg(\xdag)$. 
Since $\energy$ and $\penetrEnerg$ are both weakly sequentially lower semicontinuous 
it follows that 
$\lim_{n\to\infty}\|\Angle_n'-\Angle_*'\|_{L^2}^2 = \|(\Angle^{\dagger}-\Angle_*)'\|_{L^2}^2$.
This implies 
\begin{align*}
\|(\Angle_n-\Angle^{\dagger})'\|_{L^2}^2 
&= \|(\Angle_n-\Angle_*)'\|_{L^2}^2 -\|(\Angle^{\dagger}-\Angle_*)'\|_{L^2}^2
+ \ninnerprod{(\Angle^{\dagger}-\Angle_*)',(\Angle_n-\Angle^{\dagger})'
}_{L^2}\\
&\to 0 \qquad \mbox{as }n\to \infty.
\end{align*}
Modulo shifts in $2 \, \pi \, \Z$, we may assume that $\Angle_n(0) \in \intervalcc{- \pi , \pi}$.
By passing to a subsequence, we may assume that $\Angle_{n_k}(0) \to \Angle^\dagger(0)$.
Using the equivalent norm $\|\Angle\|_* \ceq \nabs{\Angle(0)} + \nnorm{\Angle'}_{L^2}$ 
on $H^1([0,1],\Rset)$ this yields strong convergence of $(\Angle_{n_k})$ 
to $\Angle^{\dagger}$ in $H^1([0,1],\Rset)$. As weak convergence in $\Rset^2$ 
is equivalent to strong convergence, $(L_{n_k},\BasePoint_{n_k})$ also 
converges strongly to  $(L^{\dagger},\BasePoint^{\dagger})$.   
As this holds true for any subsequence, the whole sequence 
$(\curve_{\alpha_n}^{\delta_n})$  converges strongly 
to $\xdag$ in $\Xspace$. This implies strong convergence of the corresponding 
curves in the supremum norm. 

The third statement 
follows from \cite{grasmair:10} or \cite[Theorem 4.11]{flemming:12b}. 
\end{proof}



We point out that the variational source condition \eqref{eq:vsc} is related 
to stability results as worked out for inverse medium scattering problems in 
\cite{HW:15} where such conditions with logarithmic functions $\varphi$ hold true 
under Sobolev smoothness conditions on the solution. However, for inverse obstacle 
scattering problems no such verifications of variational source conditions are known 
so far. 

\begin{remark}
It  can be seen from the references cited in the proof of Theorem \ref{theo:main} that the results can be extended to the case where $\Yspace$ 
is a Banach space and $\|F(\curve)-\data^{\delta}\|_{\Yspace}^2$ is replaced by 
more general data fidelity terms $\mathcal{S}(F(\curve),\data^{\delta})$. 
\end{remark}


\section{Inverse obstacle scattering problem}\label{sec:scat}

As a prominent example of an obstacle scattering problem we consider 
here the scattering of time-harmonic acoustic waves at a sound-soft cylindrical obstacle. 
The cross section of this obstacle is described by a bounded, connected, and 
simply connected H\"older $C^{1,\alpha}$-smooth domain $\varOmega_{\mathrm{int}}$ 
($\alpha>0$). Then its 
unbounded complement $\varOmega\ceq \Rset^2\setminus \overline{\varOmega_{\mathrm{int}}}$ 
is connected,  and the boundary curve will be denoted by 
$\varGamma=\partial\varOmega = \partial \varOmega_{\mathrm{int}}$. 
Further we consider an incident plane wave 
$u_{\mathrm{i}}(x)=\exp(\ii \, k \, \langle x,d\rangle)$ with wavenumber $k>0$ and  direction $d\in \Sb$. 
Then the forward problem consists in finding a scattered wave 
$u_{\mathrm{s}}\in H^2_{\mathrm{loc}}(\varOmega)$ such that the total wave $u\ceq u_{\mathrm{i}} + u_{\mathrm{s}}$ solves the Helmholtz equation with Dirichlet boundary condition
\begin{subequations}\label{eqs:dirichlet_bdy_problem}
\begin{align}
\Delta u(x) + k^2 u(x) = 0,\qquad  &x\in \varOmega,\label{eq:Helmholz}\\
u(x) = 0 ,\qquad &x\in \varGamma, \label{eq:DirichletCond}\\
\intertext{together with the Sommerfield radiation condition}
 \lim_{\absval{x}\to 0} \sqrt{\absval{x}} \paren{\frac{\partial u_{\mathrm{s}}(x)}{\partial \absval{x}} - \ii \, k \, u_{\mathrm{s}}(x) }& = 0 \label{eq:SRC}
\end{align}
\end{subequations}
which holds uniformly for all directions $\frac{x}{\absval{x}}\in \Sb$. This problem is well-posed under the above conditions (see e.g.\ \cite{mclean:00}) and can for example 
be solved using boundary integral equations (see \cite{CK:12}). 
Recall that solutions to the Helmholtz equation which satisfy the Sommerfield radiation condition (\ref{eq:SRC}) have the asymptotic behavior
\begin{equation}\label{eq:waveFarField}
u_{\mathrm{s}}(x) = \frac{\mathrm{e}^{\mathrm{i} k \absval{x}}}{\sqrt{\absval{x}}} \paren{u_{\infty}\paren{\tfrac{x}{\absval{x}},d} + \mathcal{O} \paren{\tfrac{1}{\absval{x}} }}, \qquad \absval{x}\to \infty
\end{equation}
(see \cite[Sect.\ 2.2 and 3.4]{CK:12}). 
The function $u_{\infty}(\cdot,d)$ is analytic on $\Sb$ and known as the far field 
pattern of the scattered wave $u_{\mathrm{s}}$. Often the far field pattern 
$u_\infty\in L^2(\Sb\times\Sb)$ can only be measured on some submanifold 
$\mathbb{M}\subset \Sb\times\Sb$, e.g.\ $\mathbb{M}=\Sb\times\{d\}$ for one 
incident field or $\mathbb{M}= \{(d,-d)\colon d\in\Sb\}$ for backscattering data.

With the definitions of Section~\ref{sec:elastic_energy}
we may describe the inverse problems as operator equations on the Riemannian 
manifold $\Mcurves$: We introduce the operator $F \colon \Mcurves\to L^2(\mathbb{M})$ 
mapping $\curve\in\Mcurves$ to the far field pattern $\uinf$ of 
the scattered field in problem (\ref{eqs:dirichlet_bdy_problem})
for the domain $\varOmega$ corresponding to $\curve$. More precisely, the boundary $\varGamma$ is given by the image of the curve parameterization $\gamma_{\curve}(\Sb)$ and 
$\varOmega$ is the unbounded component of $\Rset^2\setminus \gamma_{\curve}(\Sb)$. 
The inverse problem is described by the operator equation
\begin{equation}\label{eq:defi_F_scat}
F(\curve) = \uinf.
\end{equation}

By Schiffer's uniqueness result (\cite[Theorem 5.1]{CK:12}) $F$ is injective 
if $\mathbb{M}=\Sb\times \Sb$, and by the uniqueness result of Colton and Sleeman 
(\cite[Theorem 5.1]{CK:12}) it is also injective if $\mathbb{M}$ is the product 
of $\Sb$ with some finite set and if all curves $\gamma_\curve$ for 
$\curve\in\Mcurves_0$ are contained in a ball of a certain size. (Both results are 
stated in \cite{CK:12} for $\Rset^3$, but also hold true in $\Rset^2$.) 

Let us show that the operator $F$ also satisfies the remaining assumptions of 
Theorem \ref{theo:main}:

\begin{proposition}\label{prop:continuityF}
The operator $F$ 
maps weakly convergent sequences in $\Mcurves_0$ (with respect to the topology of 
$\Xspace$) to strongly convergent sequences in $L^2(\Sb)$ and is continuously 
Fr\'echet differentiable. \\
In particular, $F$ is strongly and weakly continuous. 
\end{proposition}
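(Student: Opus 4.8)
The plan is to factor the forward operator $F$ through the smooth parameterization map and a boundary-to-far-field map, and then exploit a compact Sobolev embedding to upgrade weak continuity on $\Mcurves_0$ to strong convergence of far-field patterns. Concretely, recall from \eqref{eq:CurveRepresentation} and the proof of Lemma~\ref{lem:lsc_penetrEnerg} that $\curve \mapsto \gamma_\curve$ is a smooth (in fact affine-plus-smooth) map $\Mcurves \to H^2(\Sb,\Rset^2)$, and that $H^2(\Sb,\Rset^2)$ embeds compactly into $C^{1,\beta}(\Sb,\Rset^2)$ for any $\beta<1$. On $\Mcurves_0$, each $\gamma_\curve$ is an embedding parameterizing a $C^{1,\beta}$-smooth domain, so the Helmholtz forward problem \eqref{eqs:dirichlet_bdy_problem} is well posed and the far-field map $G \colon \gamma \mapsto u_\infty$ is well defined. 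The key analytic input I would invoke is that $G$ is continuous — indeed continuously Fr\'echet differentiable — as a map from a neighbourhood of $\gamma_{\xdag}$ in $C^{1,\beta}(\Sb,\Rset^2)$ (or equivalently from the space of $C^{1,\beta}$-smooth domains, with boundary perturbations measured in $C^{1,\beta}$) into $L^2(\Sb)$; this is the standard domain-derivative / boundary integral equation result, e.g.\ Kirsch's theorem on the Fr\'echet differentiability of $u_\infty$ with respect to the boundary, or the mapping properties of layer potentials on $C^{1,\beta}$ curves (see \cite{CK:12,potthast:01}).

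Granting that, the argument is short. For weak continuity: let $\curve_n \rightharpoonup \curve$ in $\Mcurves_0$ with respect to the $\Xspace$-topology. Since $\Angle_n \rightharpoonup \Angle$ in $H^1(\Interval,\R)$ and $(L_n,\BasePoint_n)\to(L,\BasePoint)$ in $\R\times\R^2$, and since the parameterization map is continuous from the weak $\Xspace$-topology into the strong topology of $C^{1,\beta}(\Sb,\Rset^2)$ — this uses the compact embedding $H^1 \hookrightarrow C^0$ applied to $\Angle_n$ to get strong convergence of $\gamma_n'$, hence of $\gamma_n$ in $C^{1,\beta}$ after passing through the (Rellich-type) compact embedding $H^2 \hookrightarrow C^{1,\beta}$ — we obtain $\gamma_{\curve_n} \to \gamma_\curve$ strongly in $C^{1,\beta}$. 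Continuity of $G$ then gives $F(\curve_n) = G(\gamma_{\curve_n}) \to G(\gamma_\curve) = F(\curve)$ strongly in $L^2(\Sb)$. For $C^1$-Fr\'echet differentiability: the parameterization map $\curve \mapsto \gamma_\curve$ from $\Xspace$ to $H^2(\Sb,\Rset^2)$ is smooth (it is affine in $\BasePoint$ and $L$ and smooth in $\Angle$ via $\tau \mapsto (\cos\Angle(\tau),\sin\Angle(\tau))$, which is a smooth Nemytskii map $H^1 \to H^1 \hookrightarrow H^2$-antiderivative), it composes continuously into $C^{1,\beta}$, and $G$ is $C^1$ there; the chain rule for Fr\'echet derivatives yields that $F$ is $C^1$ on the interior of $\Mcurves_0$. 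Strong and weak continuity are then immediate (weak continuity was just shown; strong continuity follows either from it or directly from differentiability).

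The main obstacle — and the point where care is genuinely needed — is the precise regularity bookkeeping at the interface between the geometric side and the PDE side: one must check that $H^2$-regularity of $\gamma_\curve$ really does land in a function space on which the far-field map is known to be continuously differentiable. The cleanest route is $H^2(\Sb,\Rset^2) \hookrightarrow C^{1,\beta}(\Sb,\Rset^2)$ compactly for $0<\beta<1/2$, combined with the fact (cited above) that the boundary-to-far-field operator is $C^1$ on $C^{1,\beta}$-domains, matching exactly the $C^{1,\alpha}$-domain hypothesis made at the start of Section~\ref{sec:scat}. A secondary technical point is that $\Mcurves_0$ may have corners (it sits in $\AngleSpace \times [L_1,L_2]\times B$), so Fr\'echet differentiability is asserted on the interior and one-sided where needed; this does not affect weak continuity, which holds on all of $\Mcurves_0$. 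I would also remark that injectivity of the parameterization-to-curve assignment and openness of the embedded curves (Lemma~\ref{lem:intersection}) guarantee that for $\curve_n$ near $\xdag$ the domains $\varOmega$ are genuinely of the required type, so $F$ is well defined on a weakly closed neighbourhood, which is all Theorem~\ref{theo:main} requires.
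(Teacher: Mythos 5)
Your proposal is correct and follows essentially the same route as the paper: factor $F$ as the parameterization map $\curve\mapsto\gamma_{\curve}$ composed with the boundary-to-far-field operator, use compactness of the Sobolev embedding to upgrade weak $\Xspace$-convergence to strong convergence of the curves in a $C^1$-type norm, and invoke continuous Fr\'echet differentiability of the far-field map on sufficiently regular boundary parameterizations, concluding by the chain rule. The only (immaterial) difference is that the paper lands in $C^1(\Sb,\Rset^2)$ and cites \cite[Theorem 1.9]{Hohage:99} for the differentiability of the scattering map, while you pass through $C^{1,\beta}$ and cite the standard domain-derivative results.
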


\begin{proof}
Notice that the linear mapping $\Xspace\to C^1(\Sb,\Rset^2)$, 
$\curve\mapsto \gamma_{\curve}$ defined by~\eqref{eq:CurveRepresentation} 
is compact by embedding theorems for Sobolev spaces, and hence it 
maps weakly convergent sequences to strongly convergent sequences. 
Moreover, the forward scattering operator 
$C^1(\Sb,\Rset^2)\to L^2(\Sb)$, $\gamma_{\curve}\mapsto u_{\infty}$  
is continuously Fr\'echet differentiable, and in particular continuous 
by \cite[Theorem 1.9]{Hohage:99}. Therefore, the composition of these 
two mappings is continously Fr\'echet differentiable and maps weakly convergent 
to strongly convergent sequences. 
\end{proof}

Notice that by the last proposition the operator equation (\ref{eq:defi_F_scat}) 
on an infinite dimensional manifold $\Mcurves_0$ is ill-posed in the sense 
that there cannot exist a strongly continuous inverse of $F$. 
(Otherwise every weakly convergent sequence 
in $\Mcurves_0$ would be strongly convergent.) 
This implies the need for regularization to solve this equation.

\begin{remark}[translation invariance for phaseless data]
In many applications only the squared amplitude of the far field can be measured, but 
not the phase. As the amplitude of the far field is translation invariant (see \cite{kress:97}), the corresponding forward operator $\Fphase(\curve)\ceq|F(\curve)|^2$ 
is also translation invariant, i.e.\ $\Fphase(\Angle, L, \BasePoint)$ does not depend on the base point $\BasePoint$. This case fits very well into our setting since 
the shape manifold may simply be reduced to 
$\McurvesPhase\ceq\AngleSpace \times [L_1,L_2]$. 
\end{remark}


\section{Discrete setting}\label{sec:discrete}

In order to treat bending energy computationally, we represent closed curves by closed polygons. To this end, consider an arbitrary (but fixed) partition 
$(0=\tau_{0}<\tau_1 < \dots < \tau_n=1)$ of the unit interval and let the angle variable be given by a \emph{piecewise constant} function represented by 
a vector $\dAngle = (\Angle_1, \dots, \Angle_n)$, i.e.\ $\Angle(t)=\Angle_j$ 
for $t\in(\Angle_{j-1},\Angle_j]$. In perfect analogy to  \eqref{eq:CurveRepresentation}, we then define a polygon of length $L$ by
\begin{align}\label{eq:DiscCurveReconstruct}
	\gamma(t) \ceq \BasePoint +L \int_0^t \, \bigparen{\!\cos(\Angle(\tau)),\sin(\Angle(\tau))} \, \dd \tau.
\end{align}
Analogously to the smooth case, in order to fulfill the closing conditions \eqref{eq:ClosingConditions}, $\dAngle$ needs to satisfy
\begin{align}\label{eq:DiscCurveConstraints}
\dConstr(\dAngle)= 0 , \quad \mbox{where}\quad 
\dConstr(\dAngle) = \int_0^1 \bigparen{\! \cos (\theta(t)), \sin (\theta(t))} \, \dd t.
\end{align}%
Define the \emph{turning angles} by $[\dAngle]_{i} \ceq (\Angle_{i+1}-\Angle_i)$, where indices are taken modulo $n$ and  $[\dAngle]_{i}$ is shifted such that  $[\dAngle]_{i} \in (-\pi, \pi]$ for all $i$. The number $\left(\sum_i [\dAngle]_{i} \right)/ 2\pi$ is known as the  \emph{discrete turning number} of $\gamma$.

Let $\dAngleSpace\ceq\{\dAngle\in\Rset^{n}\,|\,\dConstr(\dAngle)=0\}$ and 
define the space of discrete curves  by
\begin{align*}
	\dMcurves \ceq \dAngleSpace \times [L_1,L_2] \times B \quad \subset \quad \dXspace \ceq \R^{n} \times \R \times \R^2,
\end{align*}
for a  compact, convex set of base points $B \subset \R^2$ and minimal and maximal acceptable curve lengths $0 < L_1 \leq L_2 < \infty$. On this space, the scale-invariant version of discrete bending energy for a  curve $\dcurve \in \dMcurves$ is readily defined as 
\begin{align}\label{eq:defi_disc_energy}
\dEnergy(\dcurve) \ceq \sum_{i=1}^n \left(\frac{[\dAngle]_{i}}{h_{i}}\right)^2 h_{i} \, =\, \sum_{i=1}^n \frac{([\dAngle]_{i})^2}{h_{i}},
\end{align}
see, e.g., \cite{hencky1921angenaherte}. Here the \emph{dual edge lengths} are given by $h_{i} \ceq (\tau_{i+1}-\tau_{i-1})/2$ for $i \in \{1,\dots, n\}$, where we set 
$\tau_{n+1}=1+\tau_1$. 
This expression provides the natural analogue\footnote{Notice that discrete bending energy corresponds to its smooth counterpart in the sense that turning angles at vertices correspond to curvatures \emph{integrated} over dual edges, i.e., $[\dAngle]_{i} \cong \int_{(\tau_i+\tau_{i-1})/2}^{(\tau_{i+1}+\tau_{i})/2}\kappa(s) \, \dd s$. This perspective naturally leads to formulation \eqref{eq:defi_disc_energy}.} of the smooth version \eqref{eq:defi_energy}. It goes back to the work of Hencky in his 1921 PhD thesis \cite{hencky1921angenaherte} and is in the spirit of discontinuous Galerkin (DG) methods (\cite{MR1885715}).  
A completely analogous discrete version of this energy can be defined for \emph{open} polygons. In this case, for clamped boundary conditions and under the constraint of fixed total curve length, the set of minimizers of this discrete energy converges in Hausdorff distance to the corresponding set of smooth minimizers under mesh refinement, see \cite{SchuWa19}. More specifically, the angle variables converge in $L^\infty$ and in $W^{1,p}$ for $p\in [2, \infty)$ under a suitable smoothing operator for the angle variables. Finally, a discrete analogue $\dEnergy(\dcurve,\dcurve_*)$
of the smooth pre-curved energy $\energy(\curve,\curve_*)$ is readily obtained by replacing $[\dAngle]$ by $([\dAngle]-[\dAngle]_*)$ in~\eqref{eq:defi_disc_energy}. 

\paragraph*{Implementation details} For convenience, we briefly sketch here the implementation of our method. 
The regularized functional that we seek to  minimize on the space $\dMcurves \subset \dXspace $ is of the form 
\begin{align}\label{eq:DiscRegFunc}
	\cJ^{\alpha} \colon \dMcurves \to \R,
	\quad
	\dcurve
	\mapsto
	\tfrac{1}{2}\bignorm{\discr{F}(\dcurve)-\dy^\delta}_{\dYspace}^2
	+ \alpha \,  \dTotEnerg(\dcurve) .
\end{align}
Here, $\discr{F}:\dMcurves\to\dYspace$ is some discretization for polygonal closed curves of the forward operator $F$, the term $\dy^\delta\in\dYspace$  represents the measured data in some finite dimensional Euclidean space $\dYspace$, the scalar $\alpha \geq 0$ is the regularization parameter,  and $\dTotEnerg=\dEnergy$ or $\dTotEnerg=\dEnergy+\dPenetrEnerg$ with a discrete approximation 
$\dPenetrEnerg$ of the M\"obius energy $\penetrEnerg$. 

\begin{remark}
We skip the requisite details on the definition of $\dPenetrEnerg$  since our numerical experiments show that in practice the tracking term $\tfrac{1}{2}\bignorm{\discr{F}(\dcurve)-\dy^\delta}_{\dYspace}^2$ (see \eqref{eq:DiscRegFunc} below) is sufficient to prevent iterates from developing self-in\-ter\-sec\-tions.
Notwithstanding, for details on discrete M\"{o}bius energy, see \cite{MR1470748,MR1702037}, and for
\hbox{$\varGamma$-convergence} to the smooth case see \cite{MR3268981}.
\end{remark}

The discrete nonlinear Tikhonov regularization on $\dMcurves$ may then be written as
the following constrained minimization problem: 
\begin{align}\label{eq:tikh_constraints}
	\text{Minimize} 
	\;\;
	\cJ^{\alpha}(\dcurve)
	\;\;
	\text{subject to}
	\;\;
	\dConstr(\dcurve)= 0 
	\;\;
	\text{and}
	\;\;
	 (L,\BasePoint) \in  [L_1,L_2] \times B.
\end{align}
We will ignore the inequality constraints 
$(L,\BasePoint) \in  [L_1,L_2] \times B$ for simplicity, although it would not be difficult to include 
them. In particular, these constraints never became active 
in our numerical experiments. We only require these constraints for the theoretical 
analysis in Section \ref{sec:tikhonov}.

Since $\discr{F}$ does not have a natural extension outside the discrete 
shape space $\dMcurves = \{\dcurve\,|\,\dConstr(\dcurve)=0\}$, standard methods 
of constrained nonlinear programming are not applicable.
When using iterative  methods for minimizing $\cJ^{\alpha}$, we require an \emph{intrinsic} stepping method on the constraint manifold $\dMcurves$ in order to supply the forward operator $\discr{F}$ with meaningful input. Prominent examples of such methods are intrinsic Newton-type algorithms on Riemannian manifolds, see, e.g., \cite{MR2968868}. In such methods, one determines the update direction $\du \in \dXspace$ by solving a saddle point system of the form
\begin{align}
\begin{pmatrix}
H(\dcurve) & D\dConstr^\top(\dcurve) \\
D\dConstr(\dcurve) & 0
\end{pmatrix} 
\begin{pmatrix}
\du \\ \mu
\end{pmatrix} = 
\begin{pmatrix}
-D \cJ^{\alpha}(\dcurve) \\ 0
\end{pmatrix} ,
	\label{eq:SaddlePointSystem}
\end{align}
where $H$ is (a surrogate for) the Hessian of the objective functional, the manifold $\dMcurves$ is given by the constraint equations \eqref{eq:DiscCurveConstraints}, which we encode by a function $\dConstr:\dXspace \to \R^2$, and $\mu\in \Rset^2$ denotes a Lagrange multiplier. 
The resulting linear systems have roughly the size $n\times n$ and can be solved using a direct 
solver. In our implementation, we usually use $n= 100$.

A first example is the full intrinsic Hessian, which can be obtained from the Lagrange function 
$\mathcal{L}(\dcurve,\lambda)\ceq \cJ^{\alpha}(\dcurve) + \lambda^{\top}\dConstr(\dcurve)$ 
of \eqref{eq:tikh_constraints} as
\begin{align}\label{eq:fullHessian}
H(\dcurve) = D_{\dcurve}^2 \mathcal{L}(\dcurve,\lambda_{\dcurve})
= D^2 \cJ^{\alpha}(\dcurve)  + \lambda_{\dcurve}^{\top} D^2 \dConstr(\dcurve) .
\end{align}
The requisite Lagrange multiplier $\lambda_{\dcurve}^{\top}\in \Rset^2$ 
is obtained by multiplying the equation $D_{\dcurve}\mathcal{L}(\dcurve,\lambda)=0$  
by $D \dConstr^\dagger(\dcurve)$ from the right, i.e.,
\begin{align*}
\lambda_{\dcurve}^{\top} = - D\cJ^{\alpha}(\dcurve) \, D \dConstr^\dagger(\dcurve) .
\end{align*}
Here $D \dConstr^\dagger(\dcurve) $ denotes the Moore-Penrose inverse with respect to a 
finite difference approximation of the $H^1$-inner product. 

Notice that assembling the system with the full intrinsic Hessian contains a contribution of the form $\ninnerprod{\discr{F}(\dcurve)-\dy^\delta, D^2\discr{F}(\dcurve) ( \cdot, \cdot)}_{\dYspace}$, which is dense and costly to compute. We therefore use a Gau\ss-Newton inspired surrogate, which is given in bilinear form as\footnote{Notice that in this formulation we have also dropped the additional term of the form 
$\ninnerprod{\discr{F}(\dcurve)-\dy^{\delta},D\discr{F}(\dcurve) \, D\dConstr^\dagger(\dcurve) \,  D^2\dConstr(\dcurve)}_{\dYspace}$  since it does not lead to improved convergence rates.}
\begin{align}\label{eq:GaussNewton}
H(\dcurve) = \ninnerprod{D\discr{F}(\dcurve) \, \cdot \,, D\discr{F}(\dcurve) \, \cdot \, }_{\dYspace} + \alpha \Hess \dTotEnerg(\dcurve),
\end{align}
where we identify matrices with bilinear forms and where
the \emph{intrinsic} energy Hessian has the form 
\begin{align}\label{eq:SurrogateHess}
\Hess \dTotEnerg(\dcurve) = D^2 \dTotEnerg(\dcurve) - D\dTotEnerg (\dcurve) \,D\dConstr^\dagger(\dcurve) D^2\dConstr(\dcurve).
\end{align}
Notice that the second term on the right hand side of this equation arises from 
the second term on the right hand side of \eqref{eq:fullHessian}.
In the language of differential geometry, the term $D\dConstr^\dagger(\dcurve) \, D^2\dConstr(\dcurve)$ encodes the \emph{second fundamental form} of the constraint manifold. 
The quantities on the right hand side of  \eqref{eq:SurrogateHess} are easy to assemble 
for $\dTotEnerg=\dEnergy$ due to the quadratic nature of~$\dEnergy$.

Another attractive alternative is to use
\begin{align*}
H(\dcurve) = \ninnerprod{D\discr{F}(\dcurve) \, \cdot \,, D\discr{F}(\dcurve) \, \cdot \, }_{\dYspace} + \alpha \, \ninnerprod{ \cdot, \cdot}_{\Xspace}.
\end{align*}
This way, $H(\dcurve)$ is always positive definite on the null space of $D\dConstr(\dcurve)$ and the saddle-point matrix from  \eqref{eq:SaddlePointSystem} is guaranteed to be continuously invertible.
Thus, in this case, the method boils down to a gradient descent in the manifold $\dMcurves$ with respect to the Riemannian metric induced by $H$.

Once an update direction $\du$ has been computed in the above fashion, the next iterate is found by first setting $\dx_0 = \dcurve + t \, \du$ for some small $t>0$. Restoring feasibility (i.e., ensuring that the next iterate resides on the constraint manifold) is then achieved by iterating 
\begin{align}\label{eq:ProjectOnConstraint}
\dx_{k+1} = \dx_k - D\dConstr^\dagger(\dx_k) \, \dConstr(\dx_k),
\end{align}
until $\dConstr(\dx_k)$ is sufficiently small.\footnote{Notice that the Newton-type method \eqref{eq:ProjectOnConstraint} for underdetermined systems would correspond to a nearest point projection if the constraint were linear.} The step size $t$ can be determined by a standard backtracking line search, while the matrix-vector product $\tilde \du = D\dConstr^\dagger(\dx) \, \tilde \dv$ is computed by solving the saddle point problem  
\begin{align*}
\begin{pmatrix}
	G_{\dXspace} & D\dConstr^{\top}(\dx) \\
D\dConstr(\dx) & 0
\end{pmatrix} 
\begin{pmatrix}
\tilde \du \\ \tilde\mu
\end{pmatrix} = 
\begin{pmatrix}
0 \\ \tilde \dv
\end{pmatrix} .
\end{align*}
Here $G_{\dXspace}$ is the Gram matrix of the discrete $H^1$-inner product on $\dXspace$, the 
upper left $n\times n$~block of which is a finite-difference Laplacian. 
Analoguously, $D\dEnergy(\dcurve) \,  D \dConstr^\dagger(\dcurve) 
=  (D \dConstr^\dagger(\dcurve))^{\top}  D\dEnergy(\dcurve)$ 
can be computed this way by utilizing the dual saddle point system.
Finally, one updates $\dcurve$ to the last iterate~$\dx_k$.

\section{Ab-initio reconstructions}\label{sec:results}

In this section we demonstrate the benefits of our new approach in 
numerical experiments for the  inverse obstacle scattering problem 
introduced in Section~\ref{sec:scat}. The forward scattering problems 
were solved by a boundary integral equation method using a Nystr\"om 
method with $n$ points as described in \cite[sec. 3.6]{CK:12}. 
To this end we interpolated the polygonal curve approximations  
described in section \ref{sec:discrete} trigonometrically. 
Both the evaluation of discrete forward operator $\discr{F}$ and 
the evaluation of its Jacobian $D\discr{F}$ as described e.g.\ in \cite{Hohage:99} 
require $\mathcal{O}(n^3)$ flops. 

We always use 20 equidistant incident plane waves and $n=100$ points for 
the reconstruction curves; the far field pattern is measured at 40 equidistant measurement directions. 
The wavelength is chosen of the same order of magnitude as the diameter of the obstacle. 
In all our examples we added independent, identically distributed, centered Gaussian 
random variables to the simulated far field data at each sampling point  
such that the relative noise level in the $l^2$-norm was $5\%$ 
(in Figures \ref{fig:rec_letter_s}, \ref{fig:rec_letter_m} and \ref{fig:rec_three_lobes})
or $1\%$ (in Figures \ref{fig:factorLevel_method_reconstruction} and
\ref{fig:sausage_rec_comparison}). 

\begin{figure}[ht]
\centering
\begin{subfigure}{.3\textwidth}
\includegraphics[width=\textwidth,clip,trim={130 5 130 10}]{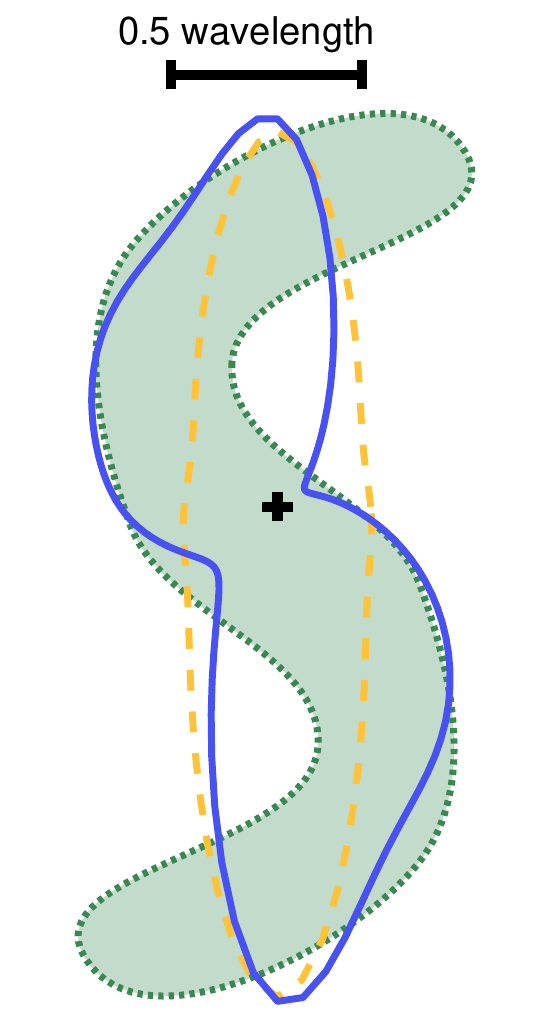}
\subcaption{ }
\end{subfigure}
\begin{subfigure}{.3\textwidth}
\includegraphics[width=\textwidth,clip,trim={130 5 130 10}]{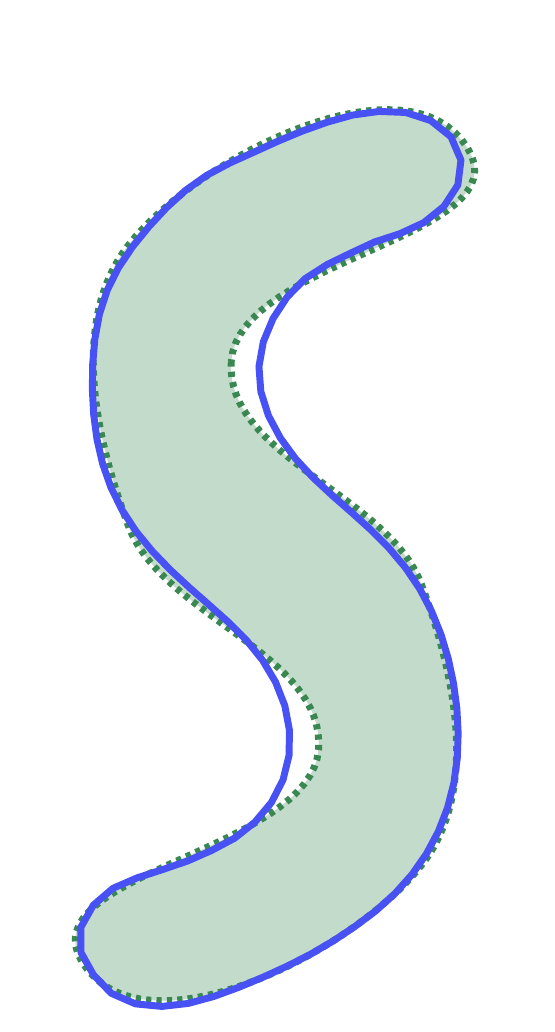}
\subcaption{ }
\end{subfigure}
\begin{minipage}{.37\textwidth}
\begin{subfigure}{1.\textwidth}
\includegraphics[width=\textwidth,clip,trim={0 20 0 0}]{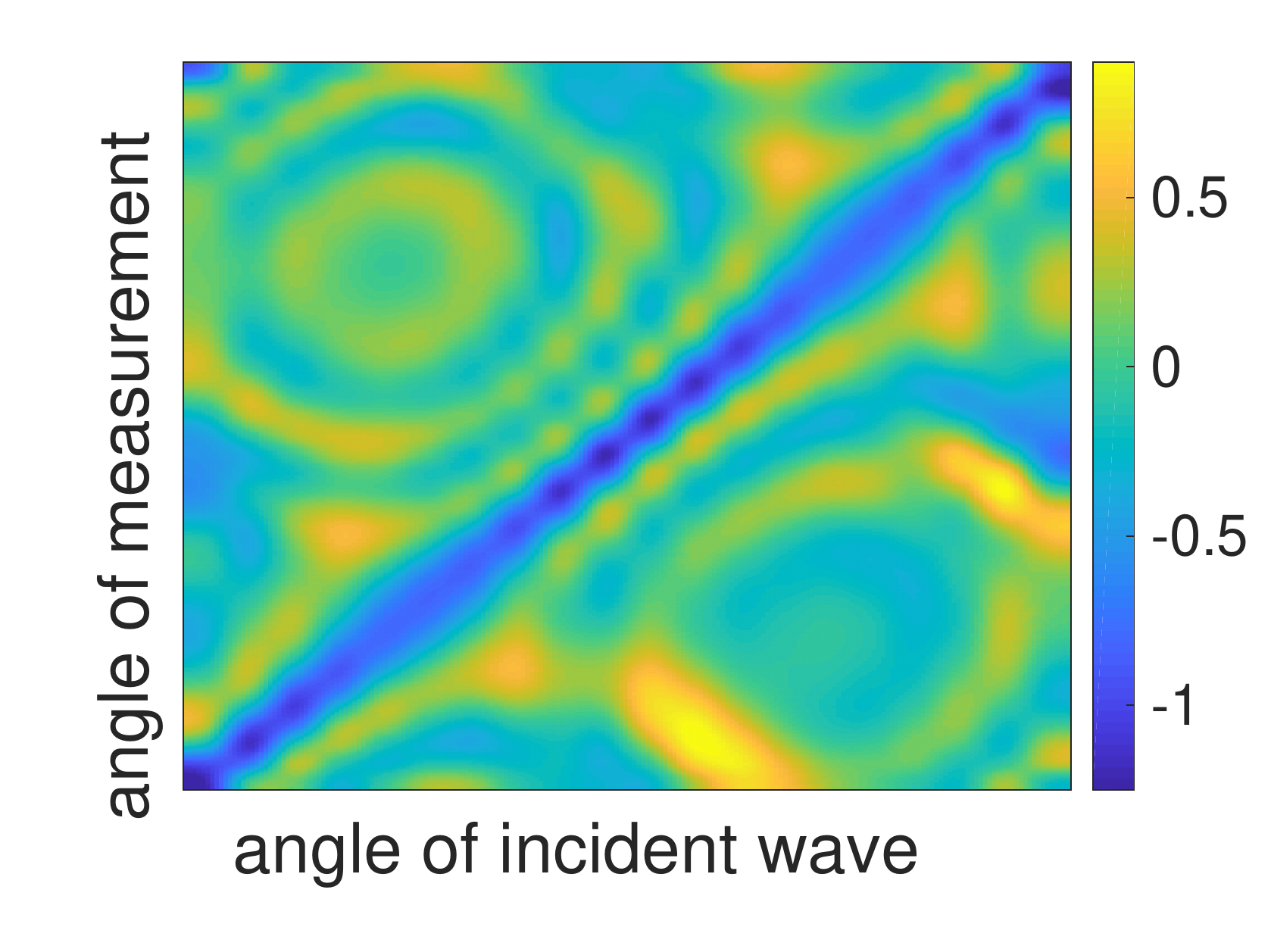}
\subcaption{ }
\end{subfigure}
\begin{subfigure}{1.\textwidth}
\includegraphics[width=\textwidth,clip,trim={0 20 0 0}]{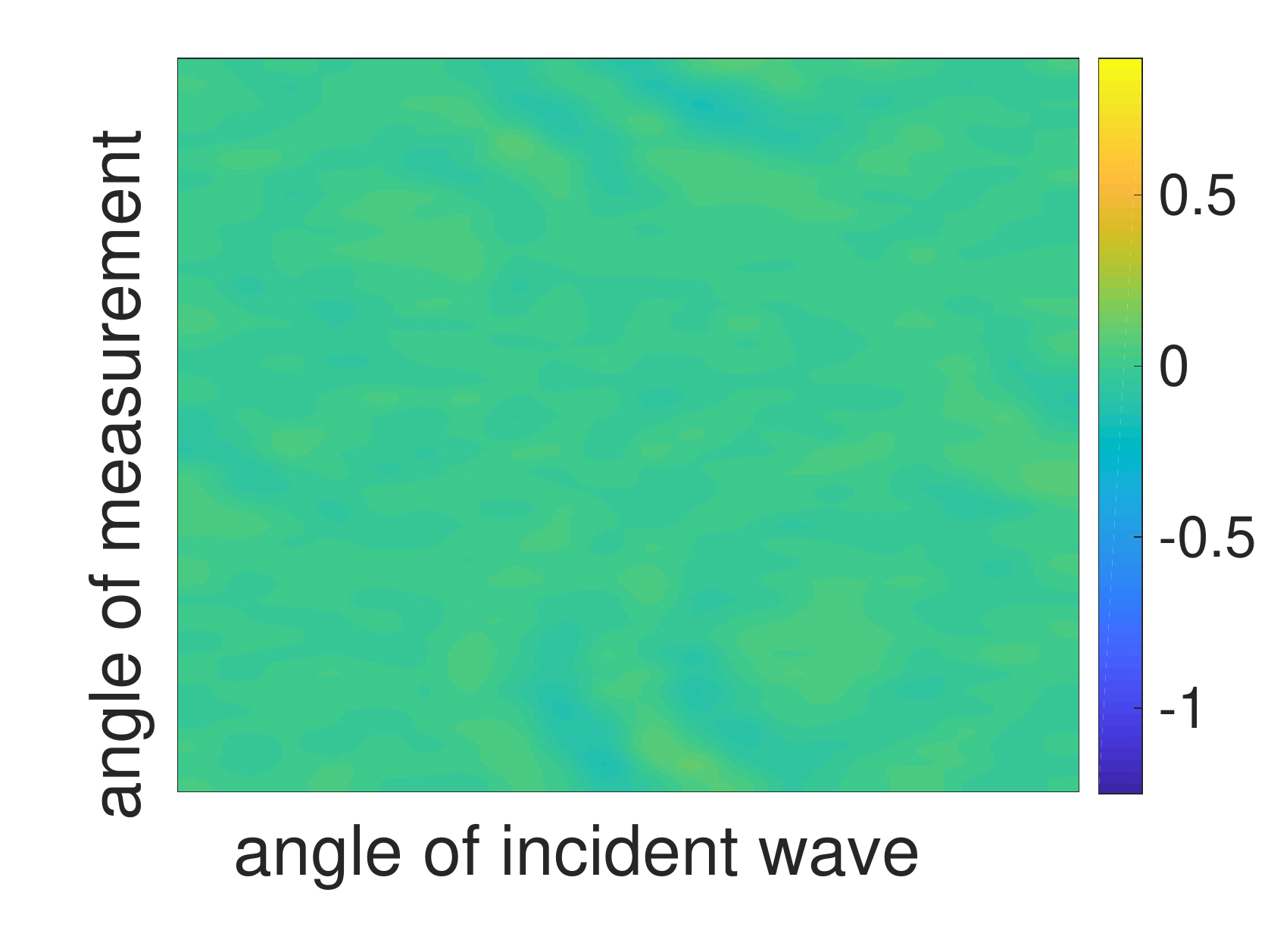}
\subcaption{ }
\end{subfigure}
\end{minipage}
\caption{\footnotesize Comparison of our method (b) to previous radial function 
parameterizations (a) for a smooth non-star-shaped domain.  
We use 20 equidistant incident waves with wavelength indicated in (a) 
and $5\%$ Gaussian white noise. 
(a) true obstacle (dotted green), initial guess (dashed yellow), and 
reconstruction (solid blue) using a radial function parameterization with center 
indicated by the black cross; 
(b) reconstruction using bending energy penalization; (c) real part of far field of the reconstruction in (b); (d) difference to the observations of the real parts of reconstructed far field in (b).}
\label{fig:rec_letter_s}
\end{figure}

The regularization parameter $\alpha$ was determined by the discrepancy principle. 
More precisely, we first minimized the Tikhonov functional for a large $\alpha$ 
by an intrinsic Gauss-Newton-type method as described in Section~\ref{sec:discrete} 
with update direction $\du$ defined by \eqref{eq:SaddlePointSystem}, 
\eqref{eq:GaussNewton}, \eqref{eq:SurrogateHess}. The Gauss-Newton iteration was 
stopped when $\|\du\|$ or the norm of the gradient of the Tikhonov functional 
$\|D \cJ^{\alpha}(\dcurve)\|$ was smaller than $10^{-5}$. 
Then we decreased $\alpha$ by a factor of~$2$ and minimized the Tikhonov functional for this smaller 
$\alpha$ using the previous minimizer as an initial guess as long as the 
condition $\norm{\discr{F}(\dcurve_{\alpha})-\dy^{\delta}}\geq 1.1\delta$ 
was satisfied. 

\begin{figure}[ht]
\centering
\begin{subfigure}{.41\textwidth}
\includegraphics[width=\textwidth,clip,trim={30 50 20 30}]{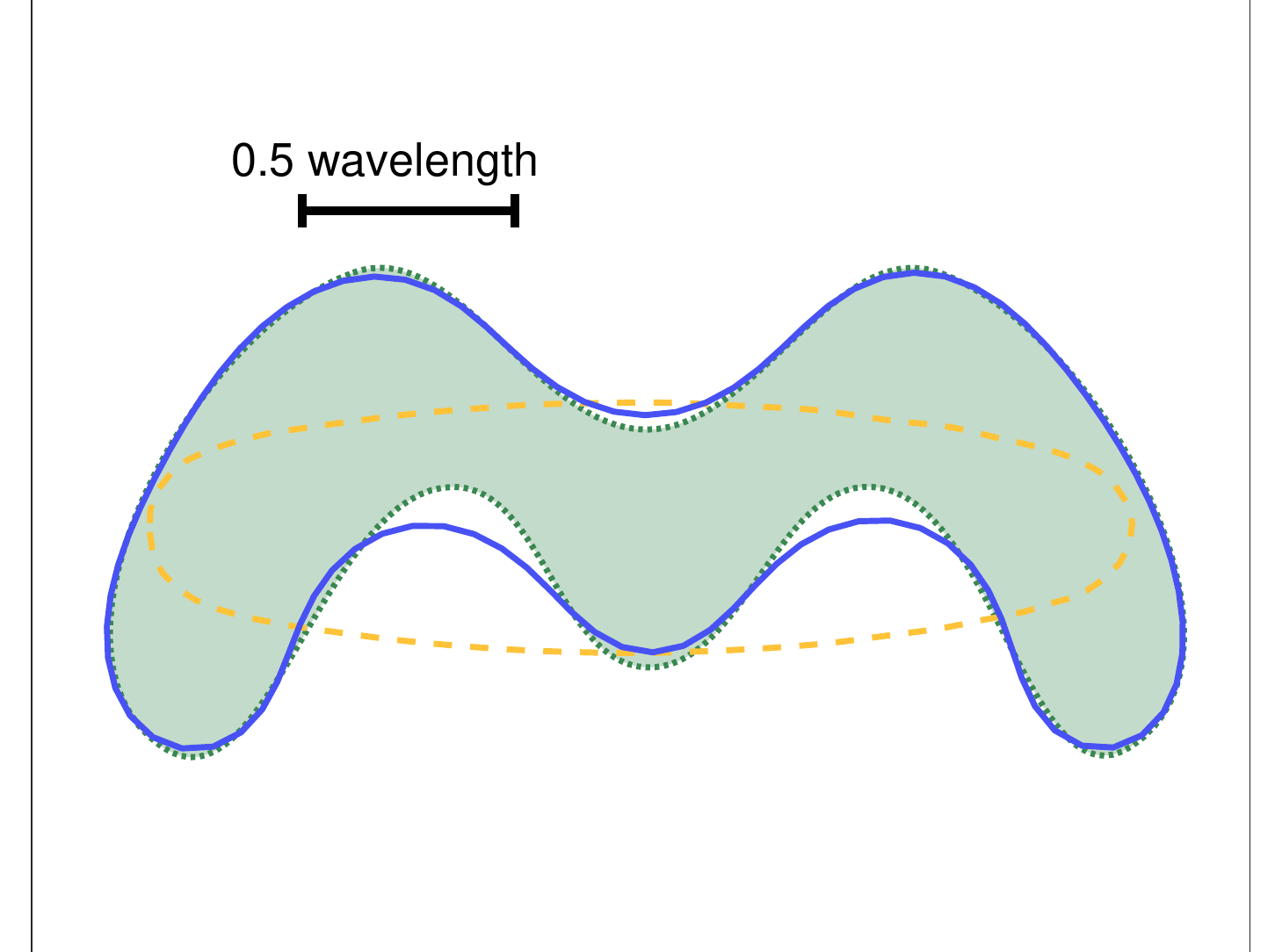}
\subcaption{ }
\end{subfigure}
\hspace*{2ex}
\begin{subfigure}{.41\textwidth}
\begin{tikzpicture}
\node[anchor=south west,inner sep=0] (image) at (0,0) {\includegraphics[width=\textwidth]{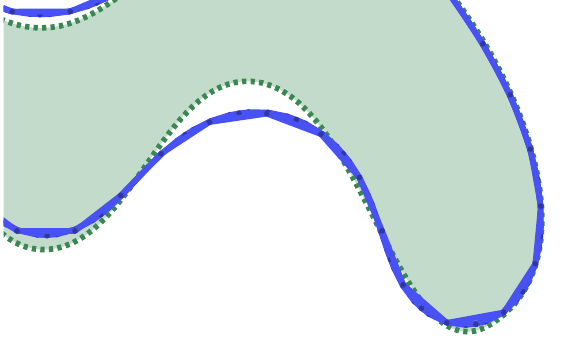}};
\begin{scope}[x={(image.south east)},y={(image.north west)}]
        \draw[black, thick] (0.,0.) rectangle (1.,1.);
    \end{scope}
\end{tikzpicture}
\subcaption{ }
\end{subfigure}
\caption{\footnotesize Reconstruction of a smooth non-star-shaped domain by our method  with $5\%$ Gaussian white noise. Parameters, line styles and colors are chosen as in 
Figure~\ref{fig:rec_letter_s}. 
Panel (b) shows a magnification of reconstructions for different numbers of points 
($n=50$, $100$, and $150$) illustrating the asymptotic independence
of the results on the choice of $n$. }
\label{fig:rec_letter_m}
\end{figure}

In Figures \ref{fig:rec_letter_s} and \ref{fig:rec_letter_m} 
we show reconstructions of two non-star-shaped domains. 
Figure \ref{fig:rec_letter_s} (d) illustrates that the far field pattern is uniformly 
fitted well. Moreover, we demonstrate in Figure \ref{fig:rec_letter_m} (b) that 
the reconstructions are almost independent of the choice of the number $n$ of points 
on the curves as long as $n$ is large enough. 
Also the number of Gau{\ss}-Newton steps and the regularization parameter $\alpha$ 
determined by the discrepancy principle do not depend on $n$. 
Note that concave parts of the boundary where multiple reflections occur in a geometrical 
optics approximation are more difficult to reconstruct than 
convex parts. 
In view of the fact that we use only one wave length which is almost of the size of 
the obstacle and a noise level of $5\%$, these reconstructions for this 
exponentially ill-posed problem are already remarkably good. 
The reconstructions could be further improved by using shorter wave lengths 
as illustrated in Figure \ref{fig:sausage_rec_comparison} (c).


\begin{figure}[ht]
\centering
\begin{subfigure}[t]{.31\textwidth}
\includegraphics[width=\textwidth,clip,trim={20 5 20 10}]{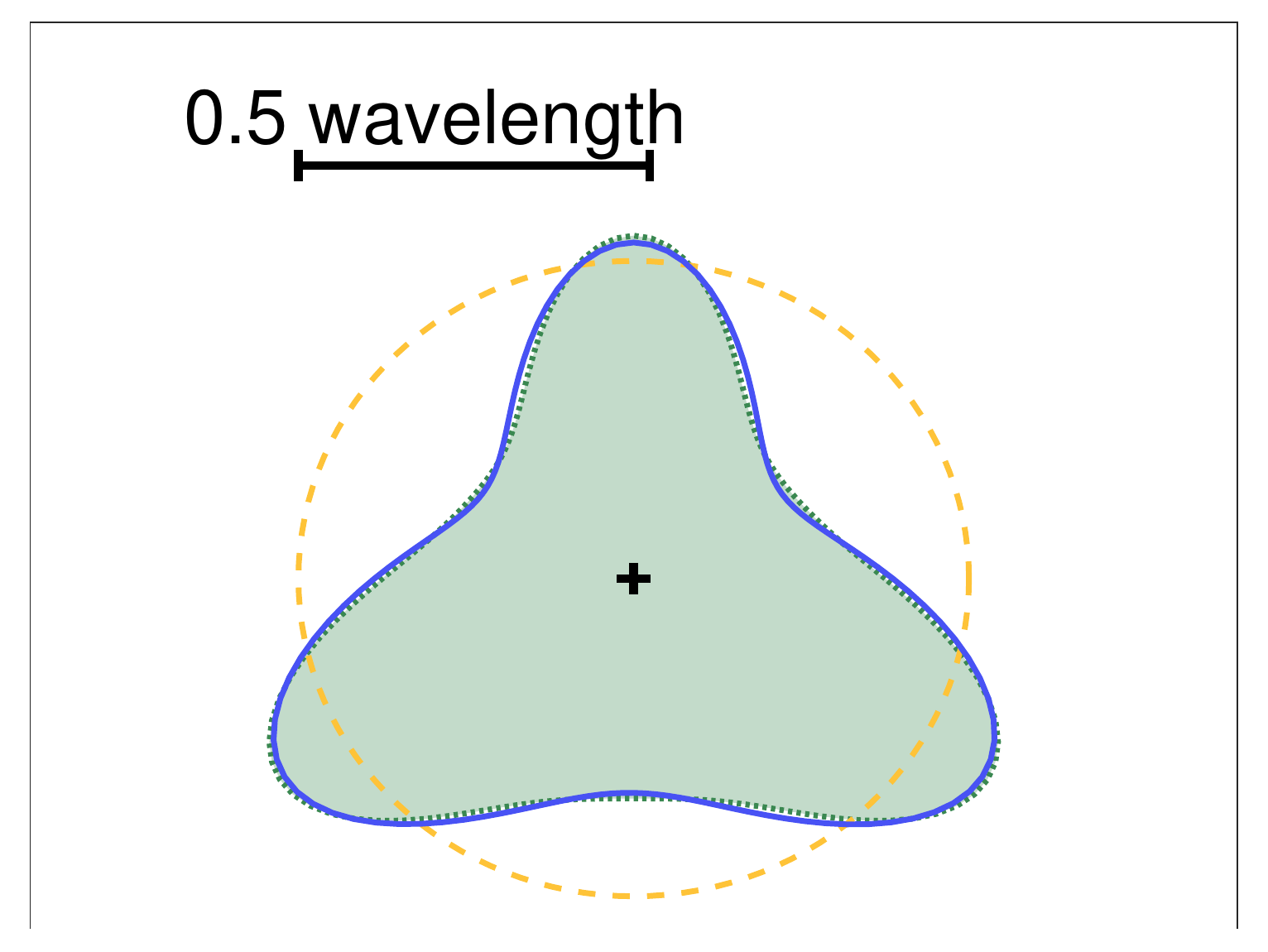}
\subcaption{ }
\end{subfigure}
\begin{subfigure}[t]{.31\textwidth}
\begin{tikzpicture}
\node[anchor=south west,inner sep=0] (image) at (0,0) {\includegraphics[width=\textwidth,clip,trim={20 5 20 10}]{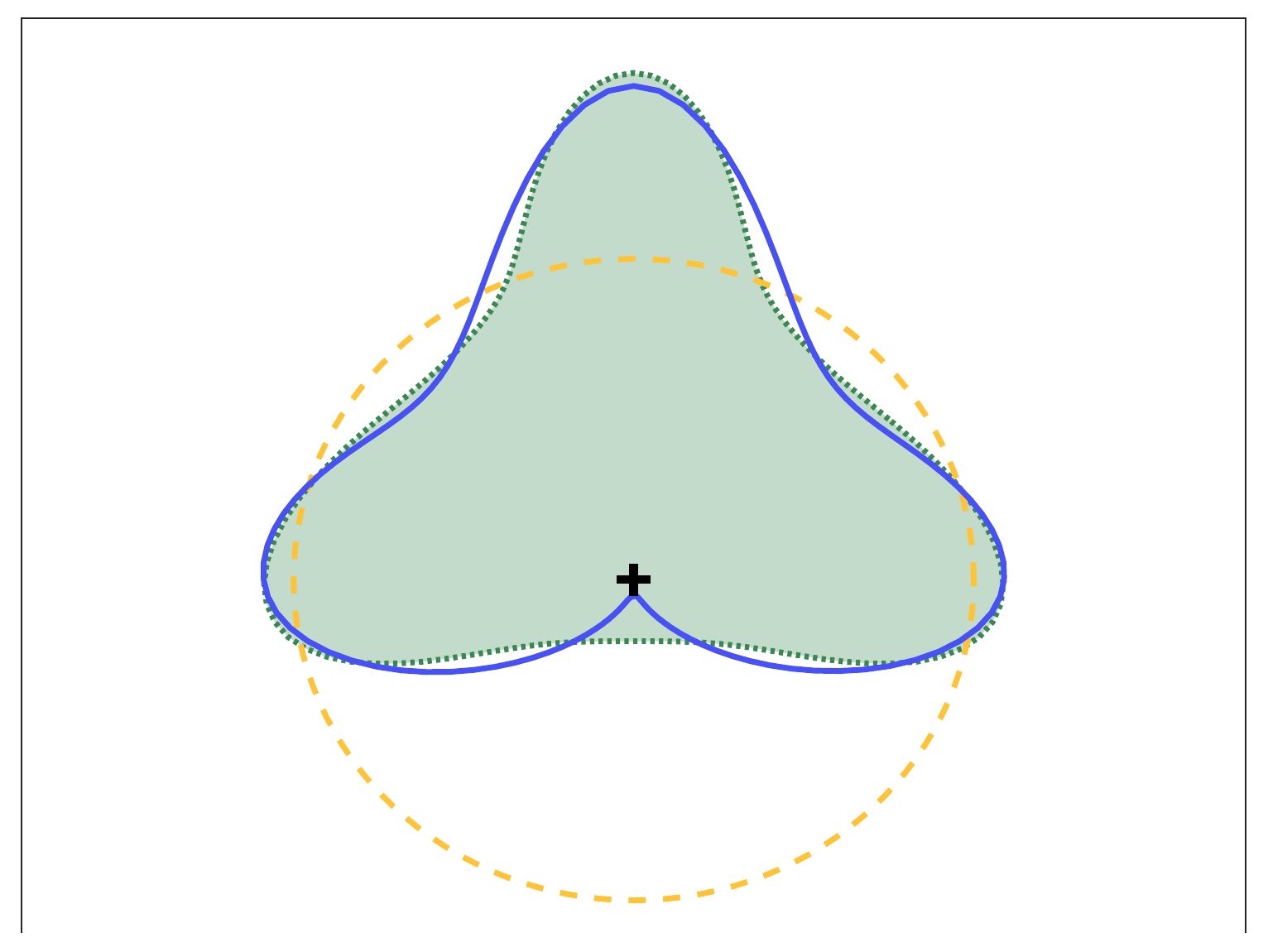}};
\begin{scope}[x={(image.south east)},y={(image.north west)}]
        \draw[black, thick] (0.32,0.2) rectangle (0.68,0.5);
    \end{scope}
\end{tikzpicture}
\subcaption{ }
\end{subfigure}
\begin{subfigure}[t]{.31\textwidth}
\begin{tikzpicture}
\node[anchor=south west,inner sep=0] (image) at (0,0) {\includegraphics[width=\textwidth,clip,trim={10 5 10 5}]{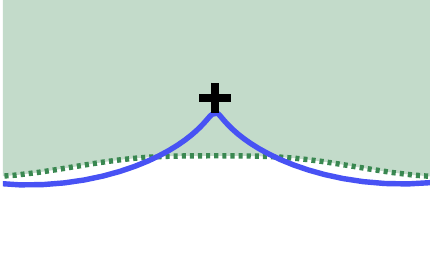}};
\begin{scope}[x={(image.south east)},y={(image.north west)}]
        \draw[black, thick] (0.,0.) rectangle (1.,1.);
    \end{scope}
\end{tikzpicture}
\subcaption{ }
\end{subfigure}
\begin{subfigure}[t]{.31\textwidth}
\begin{tikzpicture}
\node[anchor=south west,inner sep=0] (image) at (0,0) {\includegraphics[width=\textwidth,clip,trim={20 5 20 10}]{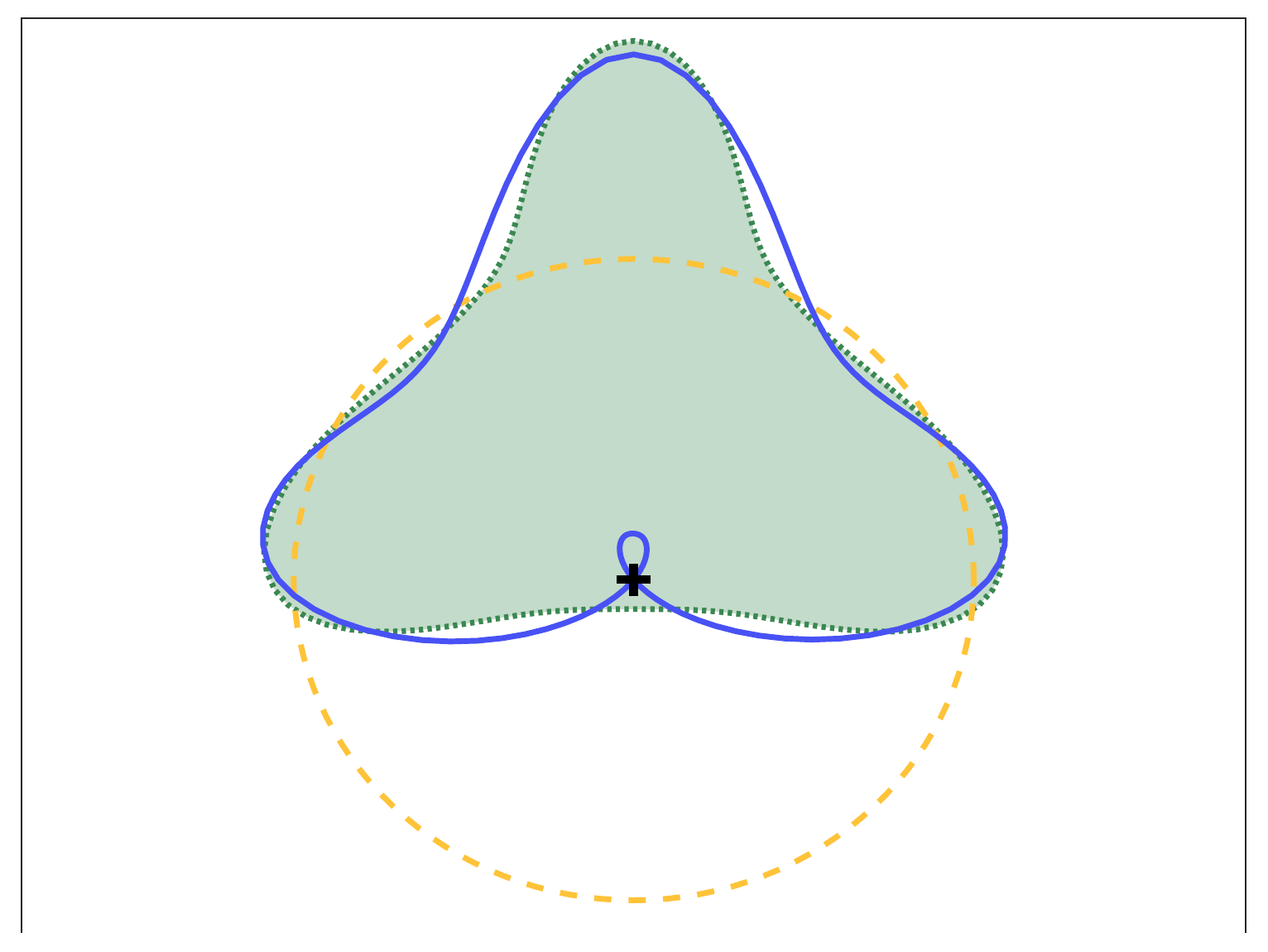}};
\begin{scope}[x={(image.south east)},y={(image.north west)}]
        \draw[black,thick] (0.32,0.2) rectangle (0.68,0.5);
    \end{scope}
\end{tikzpicture}
\subcaption{ }
\end{subfigure}
\begin{subfigure}[t]{.31\textwidth}
\begin{tikzpicture}
\node[anchor=south west,inner sep=0] (image) at (0,0) {\includegraphics[width=\textwidth,clip,trim={10 5 10 5}]{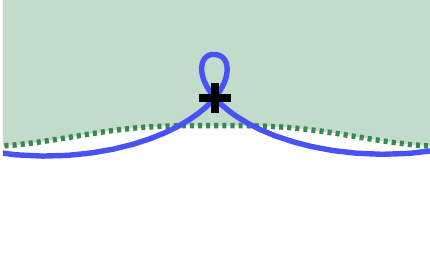}};
\begin{scope}[x={(image.south east)},y={(image.north west)}]
        \draw[black, thick] (0.,0.) rectangle (1.,1.);
    \end{scope}
\end{tikzpicture}
\subcaption{ }
\end{subfigure}
\begin{subfigure}[t]{.31\textwidth}
\includegraphics[width=\textwidth,clip,trim={20 5 20 10}]{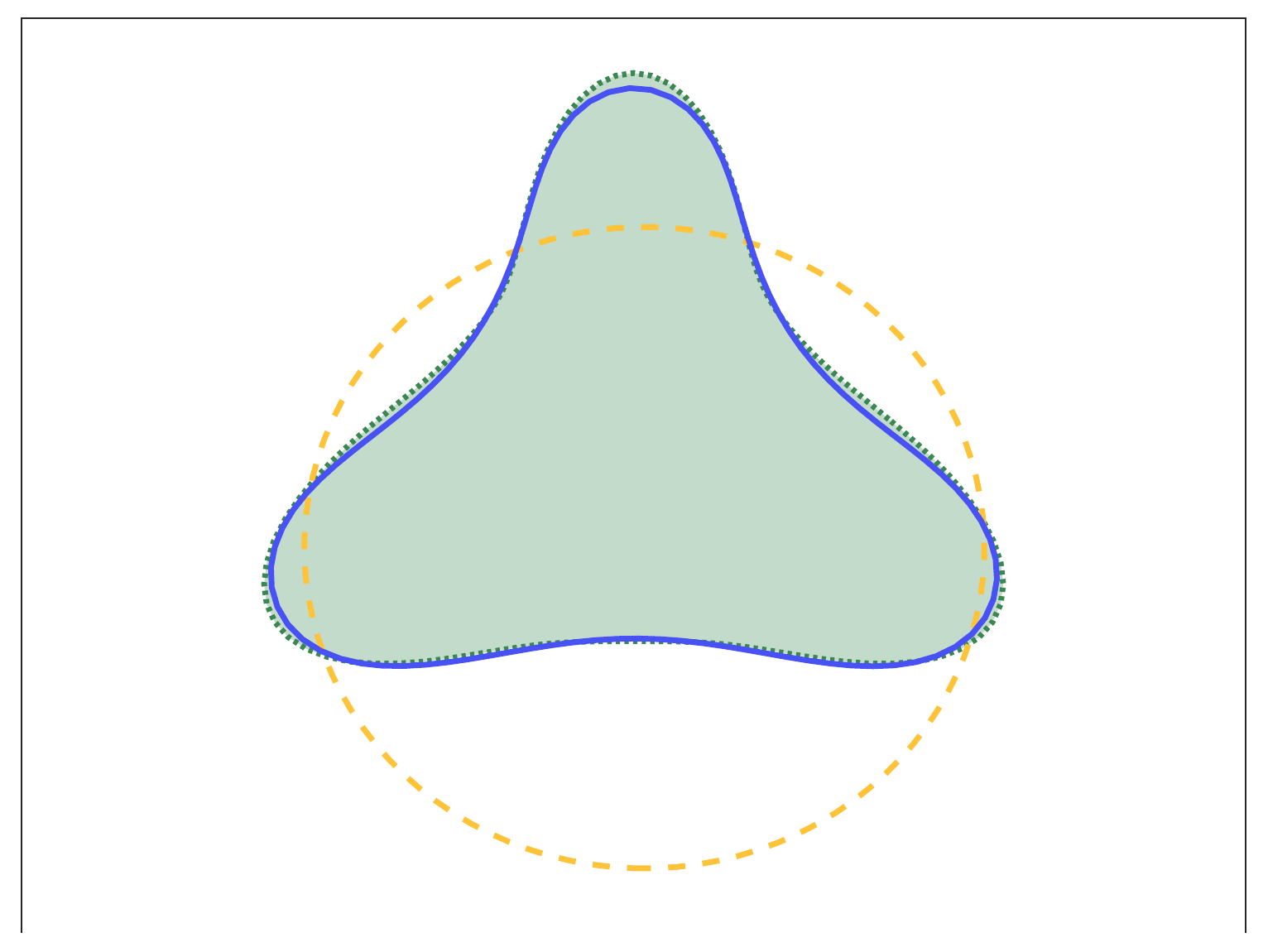}
\subcaption{ }
\end{subfigure}
\caption{\label{fig:rec_three_lobes} \footnotesize
Comparison of our method to previous radial function parameterizations 
for a star-shaped domain. 
Parameters and line styles are chosen as in Figure \ref{fig:rec_letter_s}. 
(a),(b),(d) show reconstructions using a radial function parameterization
with different choices of the center point indicated by a black cross; 
(c), (e) are magnifications of (b) and (d) around the center point, respectively; 
(f) shows the reconstruction by our bending energy approach.
}
\end{figure}

Figure \ref{fig:rec_three_lobes} (a) already illustrates the obvious limitation 
of the commonly used radial function parameterizations to star-shaped domains. 
In Figure \ref{fig:rec_three_lobes} we demonstrate a further disadvantage 
of such parameterizations, which is the dependence on the 
choice of the center point. 
We can observe unwanted deformations in the reconstruction or even a failure if the center point is chosen too  close to the 
boundary of the exact domain. This is expected since the penalty term corresponding 
to the exact solution explodes as the origin tends to the boundary. 
In contrast, in the proposed approach based on the bending energy  
the position of the obstacle with respect to the origin has 
no influence on the global minimum of the Tikhonov functional (although 
local minimization methods will get stuck in local minima if the initial guess 
is too far away from the true obstacle). 

We summarize that the proposed approach for solving inverse obstacle problems on a 
shape manifold with bending energy penalization may yield considerably better 
reconstructions than radial function parameterizations even for star-shaped obstacles
and allows the reconstruction of considerably more complicated curves. 


\section{Reconstructions with initial guesses provided by sampling methods} \label{sec:sampling}

\paragraph*{The factorization method} 
Let us briefly recall the factorization method as an example of a sampling method 
and typical numerical implementations of this method. 
Suppose that $\mathbb{M}=\Sb\times \Sb$ and denote the integral operator 
with kernel $\uinf$ by $U_\infty\in L(L^2(\Sb))$, i.e.\ 
\[
(\Uinf g)(\widehat{x})\ceq\int_{\Sb} \uinf(\widehat{x},d) \,g(d) 
\,\mathrm{d}s(d),\qquad
 \widehat{x}\in \Sb.
\]
Moreover, let $r_z(\widehat{x})\ceq\exp(-\ii \,k \, \widehat{x}^\top z)$ denote the far 
field pattern of a point source at $z\in\Rset^2$. 
The main result justifying the factorization method 
(see \cite[Theorem 3.8]{kirsch:98}) is that 
\[
r_z\in \mathrm{ran} (\Uinf^*\Uinf)^{1/4}\quad \Leftrightarrow\quad z\in \varOmega_{\mathrm{int}}.
\]
In practice, given only a discrete and noisy version of $\Uinf$, one constructs 
an approximation $A$ to the operator $(\Uinf^*\Uinf)^{-1/4}$ (e.g.\ by a trunctated 
eigenvalue decomposition) and uses sublevel sets of the function 
\begin{equation}\label{eq:defi_chi}
\chi(z)\ceq\|A \, r_z\|^2
\end{equation}
as approximations of $\varOmega_{\mathrm{int}}$ since for continuous noiseless data 
$\chi(z)<\infty$ (with an appropriate definition of $\chi$) if and only if 
$z\in \varOmega_{\mathrm{int}}$. 
There are several variants concerning the choice of $A$ which follow the 
same pattern (see \cite{AL:15}). 

In order to find a parameterization of some level line of a function $\chi\in C^1(\Rset^2)$ 
in the form \eqref{eq:CurveRepresentation} we introduce the forward operator 
$F_{1/\chi} \colon \Mcurves \to L^2([0,2\pi])$ defined by 
\[
\paren{F_{1/\chi}(\curve)}(t) \ceq \frac{1}{\chi(\gamma_\curve(t))},\qquad t\in [0,2\pi].
\]
\begin{figure}[ht]
\centering
\begin{minipage}{.7\textwidth}
\includegraphics[width=\textwidth,clip,trim={40 0 20 0}]{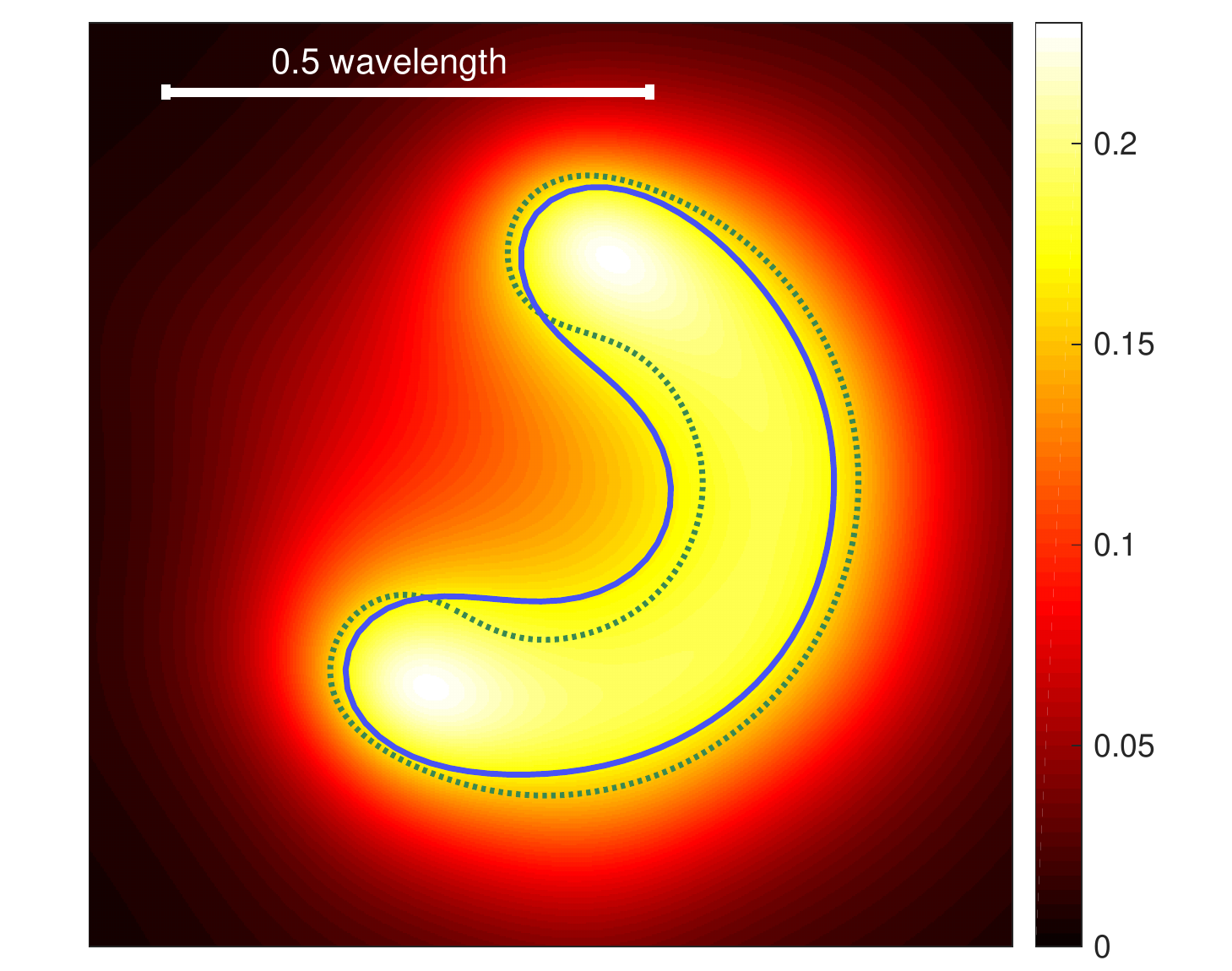}
\end{minipage}
\caption{\footnotesize 
Obstacle reconstruction  by parameterization of a level line for the factorization method. We use 20 incident 
waves and $1\%$ Gaussian white noise. 
The values of the function $1/\chi$ with $\chi$ given by \eqref{eq:defi_chi} 
are indicated by colors and the true obstacle by a dotted green line.
The solid blue line shows a parameterized level line of $1/\chi$ approximating 
the true obstacle.}
\label{fig:factorLevel_method_reconstruction}
\end{figure}

Then the problem to find a parameterization of the $\beta$-level-line of $\chi$ 
can be formulated as an operator equation $F_{1/\chi}(\curve)=\beta  \, \mathbf{1}$ 
where $\mathbf{1}\in L^2([0,2\pi])$ is the constant $1$ function. 
This problem may again be solved by Tikhonov regularization. 
The use of $F_{1/\chi}$ yields a more global convergence behavior than the use 
of $F_{\chi}$. 
Notice that $F_{1/\chi}$ is Fr\'echet differentiable with 
$(DF_{1/\chi}(\curve) \, h)(t)= -\chi(\curve(t))^{-2} \, \langle \grad \chi(\gamma_\curve(t)) , \gamma_h(t)\rangle$. 
Notice that for $\chi$ given by \eqref{eq:defi_chi}, we have 
$\partial_{z_j}\chi(z) =2 \Re \langle A \, r_z, A\, \partial_{z_j}r_z\rangle$. 

The reconstruction of a level line of $\chi$ (or equivalently $1/\chi$) is illustrated 
in Figure \ref{fig:factorLevel_method_reconstruction} using data corrupted by 
$1\%$ Gaussian white noise.

\paragraph*{Numerical results}
We now use the parameterization of the level line curve illustrated in 
Figure~\ref{fig:factorLevel_method_reconstruction} as an initial guess $\curve_*$ 
in Tikhonov regularization. The result is shown in 
Figure~\ref{fig:sausage_rec_comparison} (b). In most parts the reconstruction is 
hard to distinguish from the true curve by eye, and it is much better than 
a reconstruction using a circle as initial guess as shown in 
Figure~\ref{fig:sausage_rec_comparison} (a). 
Only in some interior parts of the ``horseshoe'' the reconstruction in (b) 
seems to take a ``short cut''. 
A reason may be that the initial guess curve $\gamma_{\curve_*}$ is ``too short'' in the 
interior part, and consequently geodesic distances of points on $\gamma_{\curve_*}$ 
\emph{relative} to its length $L_*$ do not match the geodesic distances of 
their best approximations on $\gamma_{\xdag}$ relative to $L^{\dagger}$. 
Therefore, the bending energy $\energy(\xdag,\curve_*)$ is quite large whereas due to the 
``short cut'' in the Tikhonov estimator $\curve_\alpha$ the bending energy
$\energy(\curve_\alpha,\curve_*)$ is much smaller. 
However, as illustrated in Figure \ref{fig:sausage_rec_comparison} (c), 
for smaller wavelengths the difference of the corresponding data fidelity terms 
becomes large enough to compensate for this effect.

\begin{figure}[ht]
\centering
\begin{subfigure}{.31\textwidth}
\includegraphics[width=\textwidth,clip,trim={70 5 70 5}]{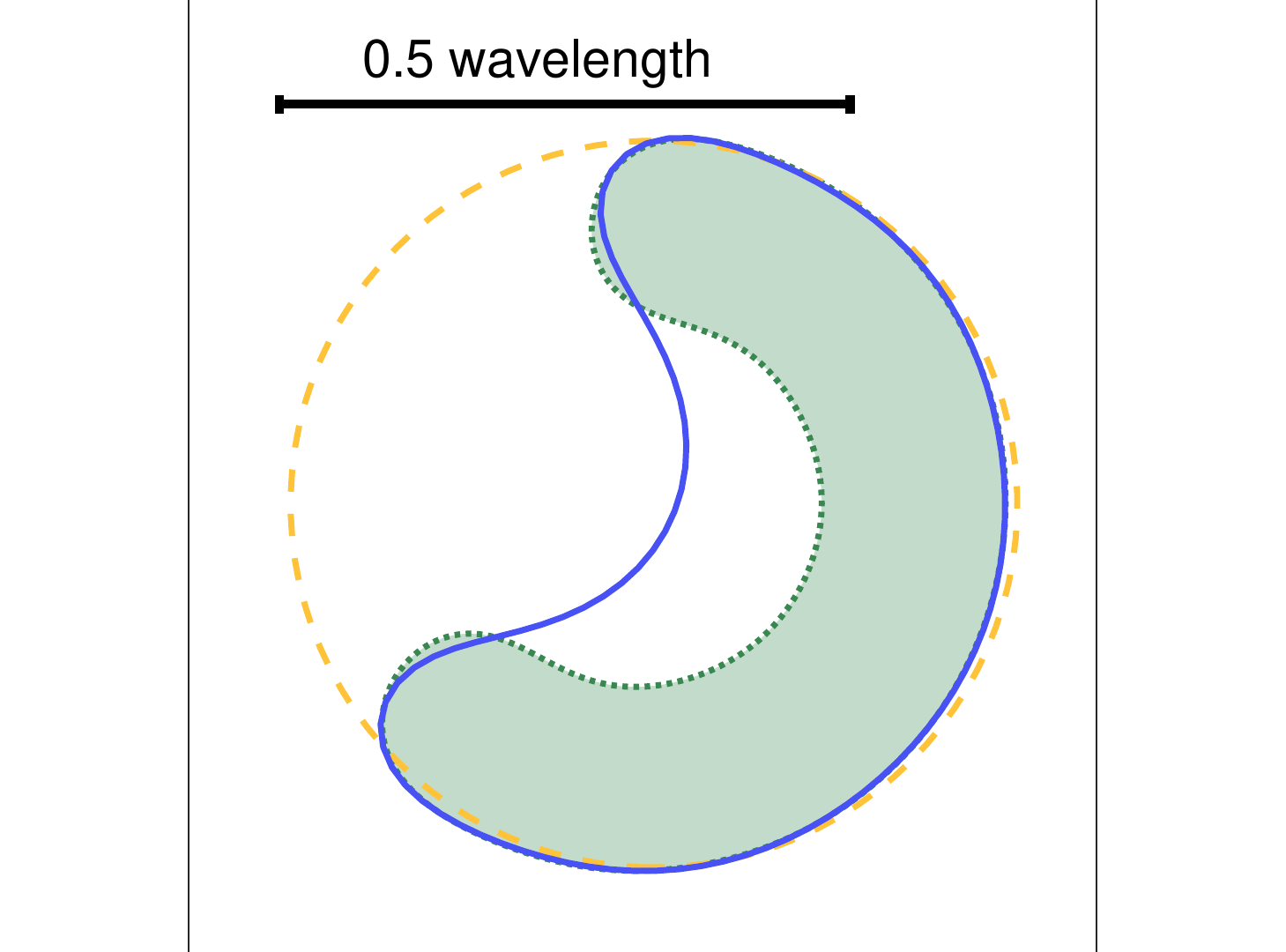}
\subcaption{ }
\end{subfigure}
\begin{subfigure}{.31\textwidth}
\includegraphics[width=\textwidth,clip,trim={70 5 70 5}]{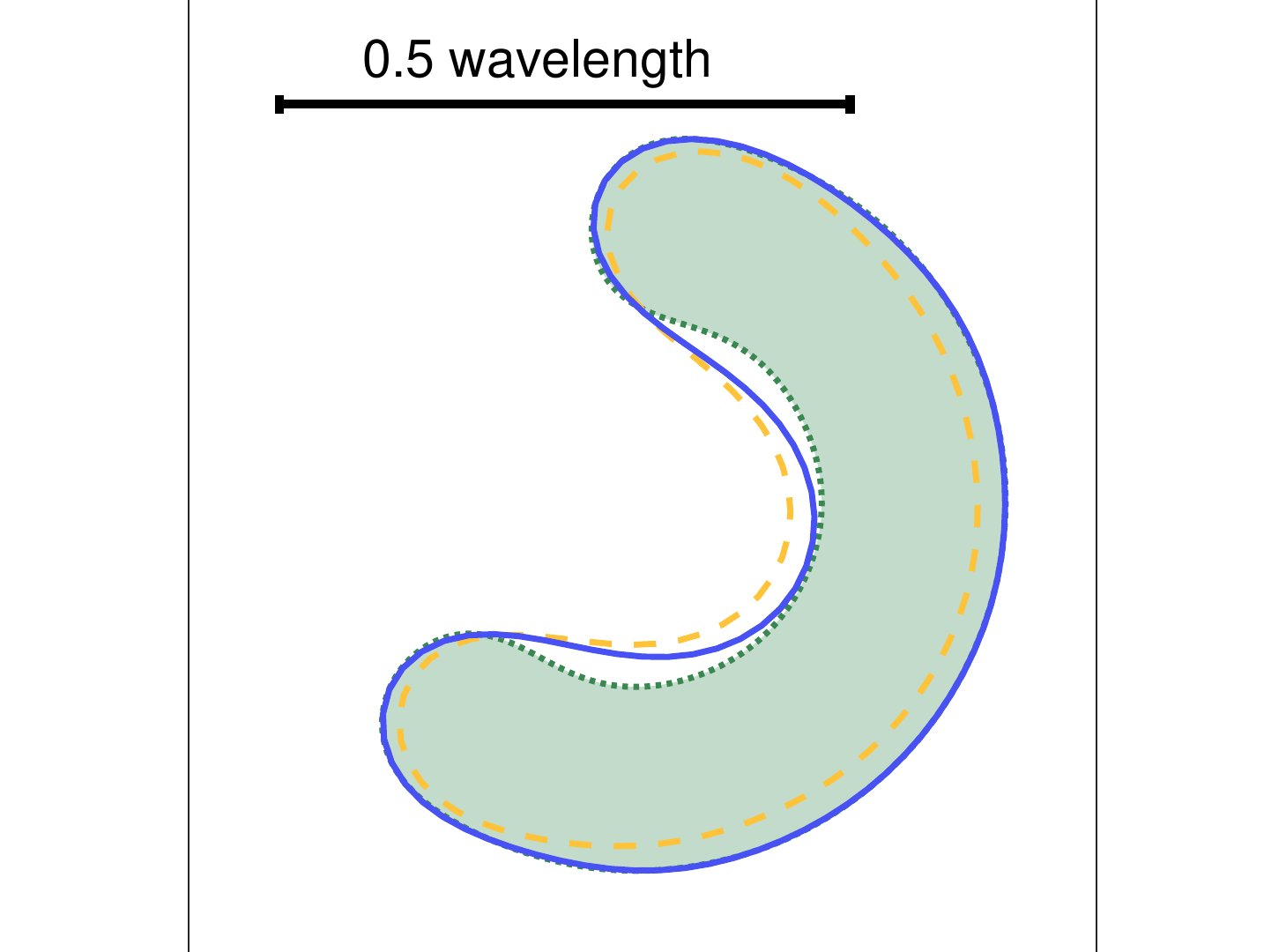}
\subcaption{ }
\end{subfigure}
\begin{subfigure}{.31\textwidth}
\includegraphics[width=\textwidth,clip,trim={70 5 70 5}]{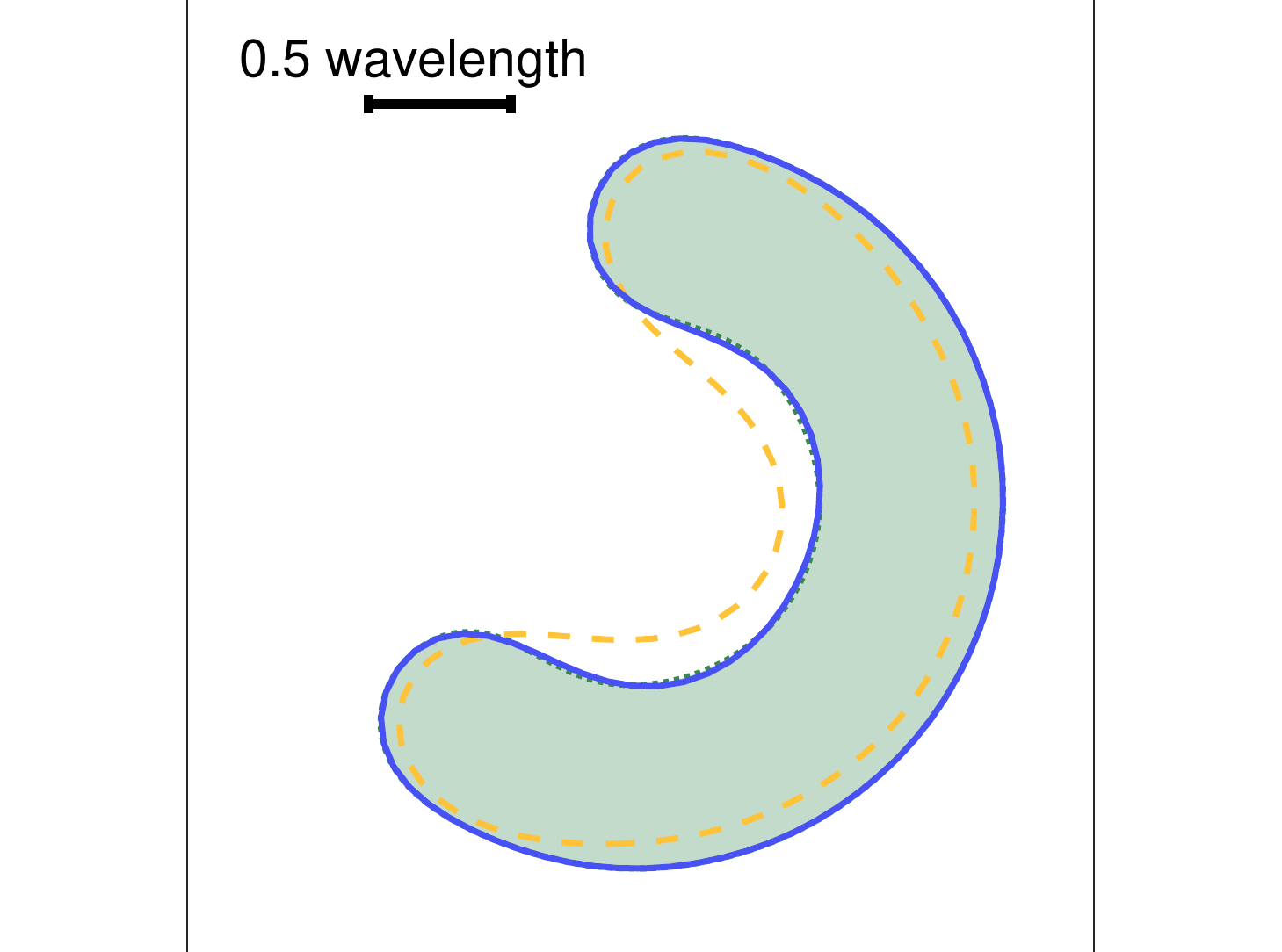}
\subcaption{ }
\end{subfigure}
\caption{\footnotesize Reconstructions by our method for different initial 
guesses and different wave numbers. We use far field data 
with $1\%$ Gaussian white noise and otherwise the same parameters and line styles 
as in Figure \ref{fig:rec_letter_s}. 
In panel (a) the initial guess (dashed yellow) 
is chosen as a circle, in panels (b) and (c) the initial guess is taken from the factorization method as 
illustrated in Figure \ref{fig:factorLevel_method_reconstruction}.  
The reconstruction in (c) uses far field data for a smaller wavelength.}
\label{fig:sausage_rec_comparison}
\end{figure}

\section*{Acknowledgement}
This paper is dedicated to the memory of Armin Lechleiter who has 
contributed substantially to the field of inverse scattering theory.  
We miss him as a friend and colleague. 

Financial support by the Deutsche Forschungsgemeinschaft through the RTG 2088 
is gratefully acknowledged. 

\bibliography{references}

\begin{thebibliography}{10}

\bibitem{AL:09}
T.~Arens and A.~Lechleiter.
\newblock The linear sampling method revisited.
\newblock {\em The Journal of Integral Equations and Applications}, pages
  179--202, 2009.

\bibitem{AL:15}
T.~Arens and A.~Lechleiter.
\newblock Indicator functions for shape reconstruction related to the linear
  sampling method.
\newblock {\em SIAM Journal on Imaging Sciences}, 8(1):513--535, 2015.

\bibitem{MR1885715}
D.~N. Arnold, F.~Brezzi, B.~Cockburn, and L.~D. Marini.
\newblock Unified analysis of discontinuous {G}alerkin methods for elliptic
  problems.
\newblock {\em SIAM J. Numer. Anal.}, 39(5):1749--1779, 2001/02.

\bibitem{ACBBGM13}
B.~Audoly, N.~Clauvelin, P.-T. Brun, M.~Bergou, E.~Grinspun, and M.~Wardetzky.
\newblock A discrete geometric approach for simulating the dynamics of thin
  viscous threads.
\newblock {\em Journal of Computational Physics}, 253:18--49, 2013.

\bibitem{BaBrMa14}
M.~Bauer, M.~Bruveris, S.~Marsland, and P.~W. Michor.
\newblock Constructing reparametrization invariant metrics on spaces of plane
  curves.
\newblock {\em Differential Geom. Appl.}, 34:139--165, 2014.

\bibitem{BaBrMi15}
M.~{Bauer}, M.~{Bruveris}, and P.~W. {Michor}.
\newblock Why use {S}obolev metrics on the space of curves.
\newblock In P.~Turaga and A.~Srivastava, editors, {\em Riemannian computing in
  computer vision}, pages 223--255. Springer, 2016.

\bibitem{BWRAG08}
M.~Bergou, M.~Wardetzky, S.~Robinson, B.~Audoly, and E.~Grinspun.
\newblock Discrete {E}lastic {R}ods.
\newblock {\em ACM Transactions on Graphics (Proceedings of SIGGRAPH)},
  27(3):63:1--63:12, 2008.

\bibitem{MR2887901}
S.~Blatt.
\newblock Boundedness and regularizing effects of {O}'{H}ara's knot energies.
\newblock {\em J. Knot Theory Ramifications}, 21(1):1250010, 9, 2012.

\bibitem{MR3461038}
S.~Blatt, P.~Reiter, and A.~Schikorra.
\newblock Harmonic analysis meets critical knots. {C}ritical points of the
  {M}\"obius energy are smooth.
\newblock {\em Trans. Amer. Math. Soc.}, 368(9):6391--6438, 2016.

\bibitem{Br15}
M.~Bruveris.
\newblock Completeness properties of {S}obolev metrics on the space of curves.
\newblock {\em J. Geom. Mech.}, 7(2):125--150, 2015.

\bibitem{BrMiMu14}
M.~Bruveris, P.~W. Michor, and D.~Mumford.
\newblock Geodesic completeness for {S}obolev metrics on the space of immersed
  plane curves.
\newblock {\em Forum Math. Sigma}, 2, 2014.

\bibitem{CC:06}
F.~Cakoni and D.~Colton.
\newblock {\em Qualitative methods in inverse scattering theory}.
\newblock Interaction of Mechanics and Mathematics. Springer-Verlag, Berlin,
  2006.

\bibitem{CK:96}
D.~Colton and A.~Kirsch.
\newblock A simple method for solving inverse problems in the resonance region.
\newblock {\em Inverse Problems}, 12:383--393, 1996.

\bibitem{CK:12}
D.~Colton and R.~Kress.
\newblock {\em Inverse Acoustic and Electromagnetic Scattering Theory}.
\newblock Springer, Berlin, Heidelberg, New York, third edition, 2012.

\bibitem{CM:86}
D.~Colton and P.~Monk.
\newblock A novel method for solving the inverse scattering problem for
  time-harmonic acoustic waves in the resonance region ii.
\newblock {\em SIAM J. Appl. Math.}, 46:506--523, 1986.

\bibitem{EulerElasticae}
L.~Euler.
\newblock Methodus inveniendi lineas curvas maximi minimive proprietate
  gaudentes, sive solutio problematis isoperimetrici lattissimo sensu accepti.
\newblock In {\em Opera {O}mnia}, volume~24 of {\em series 1}. 1744.

\bibitem{flemming:12b}
J.~Flemming.
\newblock {\em Generalized {T}ikhonov regularization and modern convergence
  rate theory in {B}anach spaces}.
\newblock Shaker Verlag, Aachen, 2012.

\bibitem{MR1259363}
M.~H. Freedman, Z.-X. He, and Z.~Wang.
\newblock M\"obius energy of knots and unknots.
\newblock {\em Ann. of Math. (2)}, 139(1):1--50, 1994.

\bibitem{grasmair:10}
M.~Grasmair.
\newblock Generalized {B}regman distances and convergence rates for non-convex
  regularization methods.
\newblock {\em Inverse Problems}, 26:115014 (16pp), 2010.

\bibitem{HH:07}
H.~Harbrecht and T.~Hohage.
\newblock Fast methods for three-dimensional inverse obstacle scattering
  problems.
\newblock {\em J. Int. Eq. Appl.}, 19:237--260, 2007.

\bibitem{MR1733697}
Z.-X. He.
\newblock The {E}uler-{L}agrange equation and heat flow for the {M}\"obius
  energy.
\newblock {\em Comm. Pure Appl. Math.}, 53(4):399--431, 2000.

\bibitem{hencky1921angenaherte}
H.~Hencky.
\newblock {\em {\"U}ber die angen{\"a}herte L{\"o}sung von
  Stabilit{\"a}tsproblemen im Raum mittels der elastischen Gelenkkette}.
\newblock PhD thesis, Engelmann, 1921.

\bibitem{Hohage:99}
T.~Hohage.
\newblock {\em Iterative method in inverse obstacle scattering: regularization
  theory of linear and nonlinear exponentially ill-posed problems}.
\newblock PhD thesis, Universit\"at Linz, 1999.

\bibitem{HW:15}
T.~Hohage and F.~Weidling.
\newblock Verification of a variational source condition for acoustic inverse
  medium scattering problems.
\newblock {\em Inverse Problems}, 31(7):075006, 14, 2015.

\bibitem{kirsch:98}
A.~Kirsch.
\newblock Characterization of the shape of the scattering obstacle by the
  spectral data of the far field operator.
\newblock {\em Inverse Problems}, 14:1489--1512, 1998.

\bibitem{KG:08}
A.~Kirsch and N.~Grinberg.
\newblock {\em The factorization method for inverse problems}, volume~36 of
  {\em Oxford Lecture Series in Mathematics and its Applications}.
\newblock Oxford University Press, Oxford, 2008.

\bibitem{KK:87}
A.~Kirsch and R.~Kress.
\newblock A numerical method for an inverse scattering problem.
\newblock In H.~W. Engl and C.~W. Groetsch, editors, {\em Inverse Problems},
  pages 279--290. Academic Press, Orlando, 1987.

\bibitem{kress:97}
R.~Kress and W.~Rundell.
\newblock Inverse obstacle scattering with modulus of the far field pattern as
  data.
\newblock In {\em Inverse Problems in Medical Imaging and Nondestructive
  Testing}, pages 75--92. Springer Vienna, 1997.

\bibitem{MR1470748}
R.~B. Kusner and J.~M. Sullivan.
\newblock M\"obius energies for knots and links, surfaces and submanifolds.
\newblock In {\em Geometric topology ({A}thens, {GA}, 1993)}, volume~2 of {\em
  AMS/IP Stud. Adv. Math.}, pages 570--604. Amer. Math. Soc., Providence, RI,
  1997.

\bibitem{MR1702037}
R.~B. Kusner and J.~M. Sullivan.
\newblock M\"obius-invariant knot energies.
\newblock In {\em Ideal knots}, volume~19 of {\em Series on Knots and
  Everything}, pages 315--352. World Sci. Publ., River Edge, NJ, 1998.

\bibitem{Lang:99}
S.~Lang.
\newblock {\em Fundamentals of Differential Geometry}.
\newblock Springer New York, 1999.

\bibitem{mclean:00}
W.~C. McLean.
\newblock {\em Strongly Elliptic Systems and Boundary Integral Equations}.
\newblock Cambridge University Press, 2000.

\bibitem{MiMu04}
P.~W. Michor and D.~Mumford.
\newblock {R}iemannian geometries on spaces of plane curves.
\newblock {\em J. Eur. Math. Soc.}, 8(1):1--48, 2006.

\bibitem{MiMu07}
P.~W. Michor and D.~Mumford.
\newblock An overview of the {R}iemannian metrics on spaces of curves using the
  {H}amiltonian approach.
\newblock {\em Appl. Comput. Harmon. Anal.}, 23(1):74--113, 2007.

\bibitem{MR0198479}
J.~R. Munkres.
\newblock {\em Elementary differential topology}, volume 1961 of {\em Lectures
  given at Massachusetts Institute of Technology, Fall}.
\newblock Princeton University Press, Princeton, N.J., 1966.

\bibitem{MTW:88}
R.~D. Murch, D.~G.~H. Tan, and D.~J.~N. Wall.
\newblock {N}ewton-{K}antorovich method applied to two-dimensional inverse
  scattering for an exterior {H}elmholtz problem.
\newblock {\em Inverse Problems}, 4:1117--1128, 1988.

\bibitem{MR1098918}
J.~O'Hara.
\newblock Energy of a knot.
\newblock {\em Topology}, 30(2):241--247, 1991.

\bibitem{potthast:96}
R.~Potthast.
\newblock A fast new method to solve inverse scattering problems.
\newblock {\em Inverse Problems}, 12:731--342, 1996.

\bibitem{potthast:01}
R.~Potthast.
\newblock {\em Point sources and multipoles in inverse scattering theory},
  volume 427 of {\em Chapman \& Hall/CRC Research Notes in Mathematics}.
\newblock Chapman \& Hall/CRC, Boca Raton, FL, 2001.

\bibitem{MR2968868}
W.~Ring and B.~Wirth.
\newblock Optimization methods on {R}iemannian manifolds and their application
  to shape space.
\newblock {\em SIAM J. Optim.}, 22(2):596--627, 2012.

\bibitem{RuWi12b}
M.~Rumpf and B.~Wirth.
\newblock Variational time discretization of geodesic calculus.
\newblock {\em IMA J. Numer. Anal.}, 35(3):1011--1046, 2015.

\bibitem{Scherzer_etal:09}
O.~Scherzer, M.~Grasmair, H.~Grossauer, M.~Haltmeier, and F.~Lenzen.
\newblock {\em Variational methods in imaging}, volume 167 of {\em Applied
  Mathematical Sciences}.
\newblock Springer, New York, 2009.

\bibitem{MR3268981}
S.~Scholtes.
\newblock Discrete {M}\"obius energy.
\newblock {\em J. Knot Theory Ramifications}, 23(9):1450045--1--1450045--16,
  2014.

\bibitem{SchuWa19}
S.~Scholtes, H.~Schumacher, and M.~Wardetzky.
\newblock Variational {C}onvergence of {D}iscrete {E}lasticae.
\newblock 2019.
\newblock arXiv:1901.02228.

\bibitem{SrJaJo06}
A.~Srivastava, A.~Jain, S.~H. Joshi, and D.~Kaziska.
\newblock Statistical shape models using elastic-string representations.
\newblock In {\em Proc. of Asian Conference on Computer Vision}, volume 3851 of
  {\em Lecture Notes in Computer Science}, pages 612--621, 2006.

\bibitem{SrKlJo11}
A.~Srivastava, E.~Klassen, S.~H. Joshi, and I.~H. Jermyn.
\newblock Shape analysis of elastic curves in euclidean spaces.
\newblock {\em IEEE Trans. Pattern Anal. Mach. Intell.}, 33(7):1415--1428,
  2011.

\end{thebibliography}
\bibliographystyle{abbrv}
\end{document}